\def\epsilon{\varepsilon}
\newtheorem{theo}{Theorem}[section]
\newtheorem{theorem}[theo]{Theorem}
\newtheorem{proposition}[theo]{Proposition}
\newtheorem{corollary}[theo]{Corollary}
\newtheorem{lemma}[theo]{Lemma}
\theoremstyle{definition}
\newtheorem{definition}[theo]{Definition}
\newtheorem{example}[theo]{Example}
\newtheorem{remark}[theo]{Remark}
\newcommand{\pten}{\ensuremath{\widehat{\otimes}_\pi}}
\newcommand{\Lip}{{\mathrm{Lip}}_0}
\numberwithin{equation}{section}
\date{\today}
\title{The Daugavet property in spaces of vector-valued Lipschitz functions}
\author{Abraham Rueda Zoca}
\address{Universidad de Murcia, Departamento de Matem\'{a}ticas, Campus de Espinardo 30100 Murcia, Spain
	\newline
	\href{https://orcid.org/0000-0003-0718-1353}{ORCID: \texttt{0000-0003-0718-1353} }}
\email{\texttt{abraham.rueda@um.es}}
\urladdr{\url{https://arzenglish.wordpress.com}}
\thanks{The research of Abraham Rueda Zoca was supported by MCIN/AEI/10.13039/501100011033: Fellowship Juan de la Cierva-Formaci\'on FJC2019-039973 and grants MTM2017-86182-P and PGC2018-093794-B-I00, by Fundaci\'on S\'eneca: ACyT Regi\'on de Murcia grant 20797/PI/18, and by Junta de Andaluc\'ia: Grants A-FQM-484-UGR18 and by FQM-0185.}
\subjclass[2020]{46B20, 46B28, 51F30}
\keywords{Daugavet property; Lipschitz functions spaces; projective tensor products; spaces of operators}
\begin{document}

\maketitle
\markboth{ABRAHAM RUEDA ZOCA}{DAUGAVET PROPERTY IN SPACES OF VECTOR-VALUED LIPSCHITZ FUNCTIONS}

\begin{abstract}
We prove that if a metric space $M$ has the finite CEP then $\mathcal F(M)\pten X$ has the Daugavet property for every non-zero Banach space $X$. This applies, for instance, if $M$ is a Banach space whose dual is isometrically an $L_1(\mu)$ space. If $M$ has the CEP then $L(\mathcal F(M),X)=\Lip(M,X)$ has the Daugavet property for every non-zero Banach space $X$, showing that this is the case when $M$ is an injective Banach space or a convex subset of a Hilbert space. 
\end{abstract}

\section{Introduction}

A Banach space $X$ is said to have the Daugavet property if every rank-one operator $T:X\longrightarrow X$ satisfies the equality
\begin{equation}\label{ecuadauga}
\Vert T+I\Vert=1+\Vert T\Vert,
\end{equation}
where $I$ denotes the identity operator. The previous equality is known as \emph{Daugavet equation} because I.~Daugavet proved in \cite{dau} that every compact operator on $\mathcal C([0,1])$ satisfies \eqref{ecuadauga}. Since then, many examples of Banach spaces enjoying the Daugavet property have appeared such as $\mathcal C(K)$ for a perfect compact Hausdorff space $K$; $L_1(\mu)$ and $L_\infty(\mu)$ for a non-atomic measure $\mu$; or preduals of Banach spaces with the Daugavet property (see \cite{kkw,kssw,werner} and references therein for a detailed treatment of the Daugavet property).

One of the problem that has focused the attention of many experts on the Daugavet property and on Lipschitz free spaces is to determine when a space of Lipschitz functions $\Lip(M)$ enjoys the Daugavet property. This problem was motivated by the question whether the space $\Lip([0,1]^2)$ enjoys the Daugavet property, posed in \cite[Section 6, Question (1)]{werner}. As consequence of a colective effort during one decade \cite{am,gpr18,ikw}, a characterisation of those metric spaces $M$ with the Daugavet property is given: $\Lip(M)$ enjoys the Daugavet property if, and only if, $M$ is length and if, and only if, the Lipschitz free space $\mathcal F(M)$ has the Daugavet property, which is in turn equivalent to the fact that the unit ball of $\mathcal F(M)$ does not have any strongly exposed point.

A natural question at this point, asked in \cite{gpr18,lr2020} is to determine when a vector-valued space of Lipschitz functions $\Lip(M,X)$, for a Banach space $X$, has the Daugavet property. This problem has shown to be surprisingly hard, and no result in this line has appeared apart from some particular examples like the space $\Lip(H,H)$ for a Hilbert space $H$ \cite[P. 482]{gpr18}.

The reason why this problem is difficult is not so surprising, however, when we have a closer look to the Banach space structure of the space $\Lip(M,X)$. In general, given a metric space $M$ and a Banach space $X$, it is know (see details below) that there exists an isometric identification between the spaces $\Lip(M,X)$ and the space of bounded linear operators $L(\mathcal F(M),X)$. It the case of $X$ begin the dual of a Banach space $Y$, we know that $L(\mathcal F(M),Y^*)=(\mathcal F(M)\pten Y)^*$, and let us mention that it is an open question from \cite[Section 6, Question (3)]{werner} whether the Daugavet property is stable by taking projective tensor products. Observe that, in general, given two Banach spaces $X$ and $Y$, it is known that $X\pten Y$ may fail the Daugavet property if we only require the Daugavet property in either $X$ or $Y$ (see \cite[Corollary 4.3]{kkw} and \cite[Remark 3.13]{llr2} for counterexamples). If we require the Daugavet property simultaneously on $X$ and on $Y$, there are promising results in \cite{mr21,rtv} that suggest that $X\pten Y$ should always enjoy the Daugavet property.

In spite of the previous results, it is conceivable that if $\mathcal F(M)$ has the Daugavet property then $\mathcal F(M)\pten X$ has the Daugavet property (it is explicitly asked in \cite[Question 1]{gpr18} and in \cite[Remark 3.8]{lr2020}). Let us mention, for instance, that it is known that if $\mathcal F(M)$ satisfies that every convex combination of slices of $B_{\mathcal F(M)}$ has diameter two, then $\mathcal F(M)\pten X$ also satisfies this property for every non-zero Banach space $X$ \cite[Corollary 3.5]{lr2020}, which contrasts with the fact that the above property is not inherited from just one of the factors by taking projective tensor products \cite[Theorem 3.8]{llr2}. On the other hand, in \cite[Proposition 3.11]{gpr18} it is proved that $\mathcal F(M)\pten X$ has the Daugavet property if $\mathcal F(M)$ has the Daugavet property and the pair $(M,X^*)$ satisfies that, for every Lipschitz function $f:N\subseteq M\longrightarrow X^*$, there is always a norm-preserving extension $F:M\longrightarrow X^*$ (i.e. $F_{|N}=f$ and $\Vert F\Vert=\Vert f\Vert$). As a natural substitute of classical McShane theorem, the above condition on the pair $(M,X^*)$ has been part of a number of results concerning the space $\Lip(M,X)$ (see e.g. \cite{blrohlip}, \cite[Proposition 3.11]{gpr18} and \cite[Theorem 2.6]{lr2020}).

The aim of this paper is to provide examples of metric spaces $M$ so that $\mathcal F(M)\pten X$ has the Daugavet property for every non-zero Banach space $X$. This is done through requiring on $M$ weak conditions of extensions of Lipschitz functions. There are two main theorems in this line. On the one hand, in Theorem \ref{theo:daugaF(M)finCEP} we prove that if $\mathcal F(M)$ has the Daugavet property and $M$ has the finite CEP then $\mathcal F(M)\pten X$ has the Daugavet property for every non-zero Banach space $X$. Moreover, in Theorem \ref{theo:daugaLipgeneral} we prove that if $\mathcal F(M)$ has the Daugavet property and $M$ satisfies the CEP then even $\Lip(M,X)=L(\mathcal F(M),X)$ has the Daugavet property. Even though this condition on $M$ seems to be very restrictive it turn out that both of the previous theorem find application to give a number of new examples $\mathcal F(M)\pten X$ enjoying the Daugavet property like $M$ being a Banach space whose dual is isometrically an $L_1$-space or $M$ being a convex subset of a Hilbert space (see Theorems \ref{theo:daugahilbert} and \ref{theo:daugaL1preduals}). Moreover, observe that Theorems \ref{theo:daugaF(M)finCEP} and \ref{theo:daugaLipgeneral} provide a partial positive answer to the above mentioned questions \cite[Question 1]{gpr18} and \cite[Remark 3.8]{lr2020}.

Let us now describe the content of the paper. Section \ref{section:CEPandrelated} makes an intensive study of the CEP and finite CEP condition (see Definition \ref{defi:CEP}) in order to provide examples of metric spaces with the (finite) CEP. It is known from \cite[Section 2.3]{beli} that if $X$ is a Banach space with the CEP then $X$ is a Hilbert space when $X$ is stricly convex and $X$ is an $L_1$-predual when $X$ is not strictly convex. On the one hand, we obtain in Proposition \ref{prop:carasubhilCEP} that a subset $M$ of a Hilbert space $H$ satisfies the CEP and $\mathcal F(M)$ has the Daugavet property if, and only if, $M$ is a convex subset of $H$. On the other hand, we make a study of those metric spaces which are finitely injective (see Definition \ref{defi:injective}), which are characterised in \cite{arpa1956} metrically. In Theorem \ref{theo:caracompainj} we prove that $M$ is finitely injective if, and only if, for every Lipschitz function $f:Y\longrightarrow M$ with $f(Y)$ compact there exists, for every metric space $X$ containing $Y$, a norm preserving extension $F:X\longrightarrow M$. Observe that this results improves simultaneously results from \cite{arpa1956} and \cite{eslo} (see Remark \ref{rema:mejoraespinola}). Moreover, as a consequence of Theorem \ref{theo:caracompainj} we derive that if $M$ is finitely injective then $M$ satisfies the finite CEP. As a consequence, we conclude in Corollary \ref{coro:finiteCEPBanach} a characterisation of those Banach spaces with the finite-CEP.

In Section \ref{section:Daugavet} we prove the main results about the Daugavet property $\mathcal F(M)\pten X$ (Theorem \ref{theo:daugaF(M)finCEP}) and in $\Lip(M,X)$ (Theorem \ref{theo:daugaLipgeneral}), obtaining the previously announced examples where the previous theorems can be applied. Moreover, we apply our techniques to face the problem whether $\mathcal F(M)\pten X$ has octahedral norm whenever $\mathcal F(M)$ has octahedral norm, posed in \cite[Question 3.2]{blrohlip}. We prove in Theorem \ref{theo:OHCEPtarget} that this is the case when the space $X^*$ has the finite CEP and $M$ has any cluster point.

\section{Notation and preliminary results}\label{section:preliminaries}

Throughout the paper we will only consider real Banach spaces. Given a Banach space $X$, we will denote the closed unit ball and the unit sphere of $X$ by $B_X$ and $S_X$ respectively. We will also denote by $X^*$ the topological dual of $X$. Given two Banach spaces $X$ and $Y$ denote by $L(X,Y)$ the space of linear bounded operators from $X$ into $Y$. 

By a \textit{slice} of the unit ball $B_X$ of a Banach space $X$ we will mean a set of the following form
\[ S(B_X,f,\alpha):=\{x\in B_X:f(x)>1-\varepsilon\}\]
where $f\in S_{X^*}$ and $\alpha>0$. If $X$ is a dual space, say $X=Y^*$, by a weak-star slice of $B_{X^*}$ we will mean a slice $S(B_X,y,\alpha)$ where $y\in Y$. 

Given a metric space $M$ and a point $x\in M$, we will denote by $B(x,r)$ the closed unit ball centered at $x$ with radius $r$. 
Let $M$ be a metric space with a distinguished point $0 \in M$.
The couple $(M,0)$ is commonly called a \emph{pointed metric space}.
By an abuse of language we will say only ``let $M$ be a pointed metric space'' and similar sentences.
Given a pointed metric space $M$ and a Banach space $X$, we will denote by $\Lip(M,X)$ the Banach space of all $X$-valued Lipschitz functions on $M$ which vanish at $0$ under the standard Lipschitz norm
$$\Vert f\Vert:=\sup\left\{ \frac{\Vert f(x)-f(y)\Vert}{d(x,y)}\ :\ x,y\in M, x\neq y \right\} .$$
First of all, notice that we can consider every point of $M$ as an origin with no loss of generality, because the $\Lip$-spaces associated to different distinguised points are isometrically isomorphic; in particular, if $M$ is a Banach space we will consider as selected point the $0\in M$. Moreover, we will simply write $\Lip(M)$ instead of $\Lip(M,\mathbb R)$. Furthermore, in view of the previous notation, for a given Lipschitz function between two arbitrary metric spaces $\varphi:M\longrightarrow N$ we will write $\Vert\varphi\Vert$ to denote the best Lipschitz constant of $\varphi$, namely, $\Vert\varphi\Vert:=\sup_{x\neq y}\frac{d(\varphi(x),\varphi(y))}{d(x,y)}$.

We denote by $\delta$ the canonical isometric embedding of $M$ into $\Lip(M)^*$, which is given by $\langle f, \delta(x) \rangle =f(x)$ for $x \in M$ and $f \in \Lip(M)$. We denote by $\mathcal{F}(M)$ the norm-closed linear span of $\delta(M)$ in the dual space $\Lip(M)^*$, which is usually called the \textit{Lipschitz-free space over $M$}; for background on this, see the survey \cite{godesurv} and the book \cite{weaver} (where it receives the name of ``Arens-Eells space''). It is well known that $\mathcal{F}(M)$ is an isometric predual of the space $\Lip(M)$ \cite[p. 91]{godesurv}. We will write $\delta_x:=\delta(x)$ for $x\in M$. It is not difficult to prove that given a complete pointed metric space $M$, if we consider a dense subset $D\subseteq M$ containing the distinguised point then $\mathcal F(D)$ is isometrically isomorphic to $\mathcal F(M)$, as a consequence of the fact that every Lipschitz function $f:D\longrightarrow\mathbb R$ can be uniquely extended to $M$ without increasing its Lipschitz norm. Because of this fact, the metric spaces will be assumed to be complete with no loss of generality.

A fundamental result in the theory of Lipschitz-free spaces is that, roughly speaking, Lipschitz-free spaces linearise Lipschitz maps. In a more precise language, given a pointed metric space $M$, a Banach space $X$ and a Lipschitz map $f:M\longrightarrow X$ such that $f(0)=0$, there exists a bounded linear operator $T_f:\mathcal F(M)\longrightarrow X$ such that $\Vert T_f\Vert=\Vert f\Vert$ defined by
$$T_f(\delta_m):=f(m) \ , \quad m\in M.$$
Moreover, the mapping $f\longmapsto T_f$ is an onto linear isometry between $\Lip(M,X)$ and the space of bounded operators $L(\mathcal F(M),X)$.
This linearisation property makes Lipschitz-free spaces a precious magnifying glass to study Lipschitz maps between metric spaces, and for example it relates some well-known open problems in the Banach space theory to some open problems about Lipschitz-free spaces (see \cite{godesurv}).

We recall that
the {\it{projective tensor product of two Banach spaces $X$ and $Y$}}, denoted by
$X\pten Y$, is the completion of $X\otimes Y$
under the norm given by 
$$\Vert u\Vert:=\inf \left \{\sum_{i=1}^n \Vert x_i\Vert\Vert y_i\Vert\ /\ n\in\mathbb N, x_i\in X, y_i\in Y\ \forall i\in\{1,\ldots, n\}, u=\sum_{i=1}^n x_i\otimes y_i \right\},$$ for every $u\in X\otimes Y$. 

We recall that the space $L(X, Y^*)$ is linearly isometric to the topological dual of $X\pten Y$ by the action $T(x\otimes y):=T(x)(y)$. We refer the reader to \cite{ryan} for background on tensor product spaces.

We say that a Banach space $X$ is an \textit{$L_1$-predual} if $X^*$ is isometric to an $L_1(\mu)$ space for some measure $\mu$. We refer the reader to \cite[Theorem 6.1]{linds64} for multiple characterisations of $L_1$-predual spaces in terms of properties of intersection of closed balls and in terms of extension of compact operators.

Coming back to the Daugavet property, let us exhibit the following (well known) characterisation of the Daugavet property which will be used in the results of Section \ref{section:Daugavet}.

\begin{theorem}\label{theo:cadaDaugavet}
Let $X$ be a Banach space. The following assertions are equivalent:
\begin{enumerate}
\item $X$ has the Daugavet property.
\item For every $x\in S_X$, every slice $S$ of $B_X$ and every $\varepsilon>0$ there exists an element $y\in S$ so that $\Vert x+y\Vert>2-\varepsilon$.
\item For every $x^*\in S_{X^*}$, every $w^*$-slice $S$ of $B_{X^*}$ and every $\varepsilon>0$ there exists an element $y^*\in S$ so that $\Vert x^*+y^*\Vert>2-\varepsilon$.
\item For every $x\in S_X$, every non-empty relatively weakly open subset $W$ of $B_X$ and every $\varepsilon>0$ there exists an element $y\in W$ so that $\Vert x+y\Vert>2-\varepsilon$.
\item For every $x,y\in B_X$ and every $\varepsilon>0$ there exists a net $\{y_s\}_{s\in S}\subseteq (1+\varepsilon)B_X$ so that $\{y_s\}\rightarrow y$ weakly and $\Vert x+y_s\Vert\geq 2-\varepsilon$ for every $s\in S$.
\end{enumerate}
\end{theorem}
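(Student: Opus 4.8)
The plan is to establish the equivalences following the now-classical arguments of Kadets, Shvidkoy, Sirotkin and Werner; a self-contained account can be found in \cite{kssw} and in the survey \cite{werner}, so I only sketch the structure. First I would reduce the defining property (1) to the slice condition (2). By homogeneity it suffices to consider a rank-one operator $T=x^*\otimes y$ with $x^*\in S_{X^*}$ and $\Vert y\Vert=1$, for which $T(z)=x^*(z)y$, $\Vert T\Vert=1$ and $\Vert I+T\Vert=\sup_{z\in B_X}\Vert z+x^*(z)y\Vert$. Since $x^*$ takes values arbitrarily close to $1$ on $B_X$, a direct perturbation argument shows that $\Vert I+T\Vert=1+\Vert T\Vert$ holds for all rank-one $T$ if and only if: for every $x\in S_X$, every $x^*\in S_{X^*}$ and every $\varepsilon>0$ there is $z\in B_X$ with $x^*(z)>1-\varepsilon$ and $\Vert x+z\Vert>2-\varepsilon$, which is exactly (2) because every slice of $B_X$ is of the form $S(B_X,x^*,\varepsilon)$.

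The heart of the matter is the equivalence (2)$\Leftrightarrow$(4); the implication (4)$\Rightarrow$(2) is immediate since slices are relatively weakly open. For (2)$\Rightarrow$(4) I would first upgrade (2) to the statement that for every $x\in S_X$ and every $\varepsilon>0$,
\[
B_X=\overline{\mathrm{conv}}\,\{z\in B_X:\Vert x+z\Vert\geq 2-\varepsilon\},
\]
which follows from (2) via the Hahn--Banach separation theorem: if some $z_0\in B_X$ failed to lie in the right-hand side, a functional strictly separating it from this closed convex set would determine a slice of $B_X$ disjoint from $\{z\in B_X:\Vert x+z\Vert\geq 2-\varepsilon\}$, contradicting (2). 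Now let $W\subseteq B_X$ be non-empty relatively weakly open; pick $w_0\in W$ and, shrinking $W$, write $W=\{z\in B_X:\vert x_i^*(z-w_0)\vert<\delta,\ i=1,\dots,n\}$. Using the displayed identity one approximates $w_0$ by a convex combination $\sum_j\lambda_j z_j$ of points $z_j\in B_X$ with $\Vert x+z_j\Vert\geq 2-\varepsilon$; since the functionals $x_i^*$ are affine, at least one index $j$ satisfies $z_j\in W$ (after an arbitrarily small enlargement of $W$ absorbed into $\delta$), and this $z_j$ is the point required by (4). The main obstacle is exactly making this last step precise: controlling simultaneously that a convex combination approximating a point of a weakly open subset of $B_X$ can be chosen with each summand on which $\Vert x+\cdot\Vert$ is almost $2$, while one of the summands stays inside the weakly open set. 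This is where the convexity of all the sets involved is used essentially.

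For the remaining equivalences, (2)$\Leftrightarrow$(3) expresses that the slice reformulation of the Daugavet property is self-dual: the clean route is to prove directly that $X$ has the Daugavet property if and only if for every $x^*\in S_{X^*}$, every $w^*$-slice $S$ of $B_{X^*}$ and every $\varepsilon>0$ there is $y^*\in S$ with $\Vert x^*+y^*\Vert>2-\varepsilon$, one implication following from (2) together with the Goldstine/Hahn--Banach density of $w^*$-slices among the relevant weak-star faces, and the reverse being symmetric. Finally, for (4)$\Leftrightarrow$(5): given $x,y\in B_X$ and $\varepsilon>0$, the relatively weakly open neighbourhoods of $y$ in $B_X$ form a directed family under reverse inclusion, and (4) lets one pick in each such $W$ a point $y_W$ with $\Vert x+y_W\Vert\geq 2-\varepsilon$; the net $\{y_W\}$ converges weakly to $y$ and lies in $B_X\subseteq(1+\varepsilon)B_X$, giving (5). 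Conversely a net as in (5) eventually enters any prescribed relatively weakly open subset of $(1+\varepsilon)B_X$, which after a rescaling by $(1+\varepsilon)^{-1}$ recovers (4).
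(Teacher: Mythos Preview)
Your outline tracks the paper closely. The paper simply refers (1)$\Leftrightarrow$(2)$\Leftrightarrow$(3)$\Leftrightarrow$(4) to \cite{werner} and only supplies (4)$\Rightarrow$(5) and the step back; your (4)$\Rightarrow$(5) is exactly the paper's net-of-weak-neighbourhoods construction.

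The one place where your route genuinely differs is in closing the cycle: you go (5)$\Rightarrow$(4), the paper goes (5)$\Rightarrow$(2), and your rescaling step is not correct as written. If $y_s\to y$ weakly in $(1+\varepsilon)B_X$, then $y_s/(1+\varepsilon)\to y/(1+\varepsilon)$, which need not lie in the prescribed relatively weakly open $W\ni y$. The paper sidesteps this by aiming at a \emph{slice} $S(B_X,f,\alpha)$: pick $\delta>0$ with $\frac{1-\delta}{1+\delta}>1-\alpha$ and $2-2\delta>2-\varepsilon$, choose $z\in B_X$ with $f(z)>1-\delta$, run (5) with parameter $\delta$ to get $z_s\to z$ weakly in $(1+\delta)B_X$ with $\Vert x+z_s\Vert\geq 2-\delta$, and then normalise by $\Vert z_s\Vert$ (not by $1+\delta$). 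Because a slice is cut by a single functional and $f(z_s)>1-\delta$ forces $\Vert z_s\Vert>1-\delta$, one gets $f(z_s/\Vert z_s\Vert)>(1-\delta)/(1+\delta)>1-\alpha$, so the normalised point lands in $S$, while $\Vert x+z_s/\Vert z_s\Vert\,\Vert\geq 2-2\delta>2-\varepsilon$. Your (5)$\Rightarrow$(4) can be salvaged (choose first $y\in W$ with $\Vert y\Vert<1$ and apply (5) to $(1+\varepsilon)y$), but the detour through (2) is cleaner.

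A small caution on your (2)$\Rightarrow$(4) sketch: from $w_0\approx\sum_j\lambda_j z_j$ with each $z_j$ in $\{z:\Vert x+z\Vert\geq 2-\varepsilon\}$ it does \emph{not} follow that some $z_j$ satisfies $\lvert x_i^*(z_j-w_0)\rvert<\delta$ for all $i$ simultaneously; affinity of the $x_i^*$ only controls the weighted average in each coordinate separately. You rightly flag this as the crux, but the actual argument in \cite{kssw} is not a pigeonhole on a single convex combination.
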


\begin{proof}
For the equivalence of (1)$\Leftrightarrow$(2)$\Leftrightarrow$(3)$\Leftrightarrow$(4) we refer the reader, for instance, to \cite{werner}.

To prove (4)$\Rightarrow$(5) pick $x,y\in B_X$. Given any weak open $U$ containing $y$ in $B_X$ there exists $y_U\in U$ so that $\Vert x+y_U\Vert\geq 2-\varepsilon$. Let $S$ be the set of those weak open neighbourhoods of $y$, which is directed under the order given by $U\leq V\Leftrightarrow V\subseteq U$. Then $\{y_U\}_{U\in S}$ satisfies the requirements.

Finally, for (5)$\Rightarrow$(2), pick $x\in S_X$, a slice $S=S(B_X,f,\alpha)$ of $B_X$ and $\varepsilon>0$, and let us find $y\in S$ so that $\Vert x+y\Vert>2-\varepsilon$. Take $\delta>0$ small enough so that $\frac{1-\delta}{1+\delta}>1-\alpha$ and $2(1-\delta)>2-\varepsilon$. Select $z\in B_X$ so that $f(z)>1-\delta$.

By assumptions there exists a net $\{z_s\}\subseteq (1+\delta)B_X$ which is weakly convergent to $z$ and so that $\Vert x+z_s\Vert\geq 2-\delta$ holds for every $s$. Since $f(z_s)\rightarrow f(z)>1-\delta$ find $s$ big enough so that $f(z_s)>1-\delta$. Consider $y:=\frac{z_s}{\Vert z_s\Vert}$ which belongs to $B_X$. Moreover
$$f(y)=\frac{f(z_s)}{\Vert z_s\Vert}>\frac{1-\delta}{1+\delta}>1-\alpha,$$
so $y\in S$. Moreover
$$\Vert x+y\Vert\geq \Vert x+y_s\Vert-\Vert y_s-y\Vert>2-\delta-\delta>2-\varepsilon,$$
getting (2) and finishing the proof.
\end{proof}

Let us finish explaining the characterisation of when a space $\Lip(M)$ enjoys the Daugavet property. We say that $M$ is \textit{length} if, for every $x\neq y\in M$, $d(x,y)$ equals the infimum of the length of all rectifiable curves joining them. If the infimum is actually a minimum, we say that $M$ is \textit{geodesic}.

If $M$ is complete, it is known that $M$ is geodesic if, and only if, for every $x\neq y\in M$ there exists $z\in M$ so that $d(x,z)=d(y,z)=\frac{d(x,y)}{2}$. Also, $M$ is length if, and only if, for every $x\neq y\in M$ and every $\varepsilon>0$ it follows that $B\left(x,\frac{(1+\varepsilon)d(x,y)}{2}\right)\cap B\left(y,\frac{(1+\varepsilon)d(x,y)}{2}\right)\neq \emptyset$. See \cite[Theorem 3.2]{gpr18}.

For a complete pointed metric space $M$, the following are equivalent:
\begin{enumerate}
\item $\Lip(M)$ has the Daugavet property.
\item $\mathcal F(M)$ has the Daugavet property.
\item $M$ is length.
\item $M$ has \textit{property (Z)}, that is, for every $x,y\in M$ with $x\neq y$ and every $\varepsilon>0$ there exists $z\in M\setminus\{x,y\}$ so that
$$d(x,z)+d(y,z)<d(x,y)+\varepsilon\min\{d(x,z),d(y,z)\}.$$
\item $B_{\mathcal F(M)}$ does not have strongly exposed points.
\end{enumerate}

See \cite{am,gpr18} for background.

\section{Examples of metric spaces with the finite-CEP}\label{section:CEPandrelated}

In this section we will take a closer look to the properties of extensions of Lipschitz functions in order to exhibit examples of metric spaces where the results of Section \ref{section:Daugavet} applies.

Let us start with formal required definitions.

\begin{definition}\label{defi:CEP}
Let $M$ be a metric space.
\begin{enumerate}
\item We say that $M$ has the \textit{contraction-extension property (CEP)} if for every Lipschitz function $f:N\longrightarrow M$, with $N\subseteq M$, there exists a norm-preserving extension to the whole $M$. 

\item We say that $M$ has the \textit{finite contraction-extension property (finite CEP)} if for every Lipschitz function $f:N\longrightarrow M$, with $N\subseteq M$, so that $f(N)$ is finite there exists a norm-preserving extension to the whole $M$. 
\end{enumerate}
\end{definition}

Observe that CEP was considered for Banach spaces in \cite[Section 2.3]{beli}. In this section we will prove that CEP is strictly stronger than the finite CEP. 

In any case, it turns out that the finite CEP is a strong requirement on the metric space $M$. We can illustrate this just focusing on the case of Banach spaces. According to \cite[Section 2.3]{beli}, it follows that if $X$ is a Banach space, if $X$ has the CEP there are two possibilities: if $X$ is strictly convex then $X$ is a Hilbert space, otherwise $X$ is an $L_1$-predual. Actually, the remark after the proof of \cite[Theorem 2.11]{beli} shows that the previous result holds if we replace the CEP with the finite-CEP.

In the case of Hilbert spaces the converse also holds, that is, if $H$ is a Hilbert space then $H$ has the CEP (c.f. e.g. \cite[Theorem 1.12]{beli}).

In view of the previous result, we will examine which metric spaces $M$ contained in a Hilbert spaces which are geodesic and have the CEP. The following proposition is probably well known for speciallists, but let us include a short proof for the sake of completeness.

\begin{proposition}\label{prop:carasubhilCEP}
Let $M$ be a subset of a Hilbert space $H$. The following are equivalent:
\begin{enumerate}
\item $M$ is geodesic and has the CEP.
\item $M$ is geodesic and has the finite CEP.
\item $M$ is geodesic.
\item $M$ is length.
\item $M$ is convex.
\end{enumerate}
\end{proposition}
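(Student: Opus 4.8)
The plan is to prove the chain of implications $(1)\Rightarrow(2)\Rightarrow(3)\Rightarrow(4)$ together with $(5)\Rightarrow(1)$ and $(4)\Rightarrow(5)$, which closes the cycle. The first two implications are trivial ($(1)\Rightarrow(2)$ because finite sets are a special case of Lipschitz images, and $(3)\Rightarrow(4)$ because geodesic spaces are length; $(2)\Rightarrow(3)$ will be addressed below, but actually it is cleaner to observe that $(3)\Rightarrow(4)$ is immediate and then get $(4)\Rightarrow(5)$ and $(5)\Rightarrow(1)$ to close everything, leaving $(2)\Rightarrow(3)$ or rather $(2)\Rightarrow(5)$ as a separate line). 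Let me instead organize the argument around proving $(4)\Rightarrow(5)$, $(5)\Rightarrow(1)$, and $(2)\Rightarrow(4)$ — with the remaining implications being formal.

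\textbf{The implication $(4)\Rightarrow(5)$ (length subset of Hilbert space is convex).} First I would fix $x,y\in M$ and show the midpoint $\frac{x+y}{2}$ lies in $M$. Using the characterisation of length recalled in the excerpt, for each $\varepsilon>0$ the balls $B(x,\frac{(1+\varepsilon)d(x,y)}{2})$ and $B(y,\frac{(1+\varepsilon)d(x,y)}{2})$ meet $M$; pick $z_\varepsilon$ in the intersection. In a Hilbert space the parallelogram law forces $z_\varepsilon$ to be close to the true midpoint $\frac{x+y}{2}$: indeed, writing $m=\frac{x+y}{2}$, one has $\|z_\varepsilon-m\|^2 = \frac12\|z_\varepsilon-x\|^2+\frac12\|z_\varepsilon-y\|^2-\frac14\|x-y\|^2 \le \frac{(1+\varepsilon)^2-1}{4}d(x,y)^2$, which tends to $0$. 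Since $M$ is complete, $m\in M$. Thus $M$ contains all midpoints of pairs of its points, and a standard dyadic-density-plus-completeness argument upgrades this to full convexity of $M$.

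\textbf{The implications $(5)\Rightarrow(1)$ and $(2)\Rightarrow(4)$.} For $(5)\Rightarrow(1)$: if $M$ is convex, then $M$ is a convex subset of the Hilbert space $H$, and since $H$ has the CEP (stated in the excerpt, citing \cite[Theorem 1.12]{beli}), any Lipschitz $f:N\to M\subseteq H$ extends norm-preservingly to $F:M\to H$; but $M$ being convex and closed means we can compose with the nearest-point projection $P_M:H\to M$, which is a $1$-Lipschitz retraction, so $P_M\circ F:M\to M$ is the desired norm-preserving extension taking values in $M$. For $(2)\Rightarrow(4)$, suppose $M$ has the finite CEP but is \emph{not} length; then by property (Z) failing there exist $x\neq y$ in $M$ and $\varepsilon_0>0$ such that every $z\in M\setminus\{x,y\}$ satisfies $d(x,z)+d(y,z)\ge d(x,y)+\varepsilon_0\min\{d(x,z),d(y,z)\}$ — in particular no approximate midpoints exist. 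I would then build a two-point Lipschitz map (say on $N=\{x,y\}$, sending $x\mapsto y$, $y\mapsto x$, or better a suitable constant-type map onto a two-point target) whose norm-preserving extension would produce such a forbidden midpoint, contradicting the finite CEP; this is the reverse direction of the known equivalence between length and CEP-type conditions and is where the Hilbert structure (via the midpoint argument above) is again used to force the contradiction.

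\textbf{Main obstacle.} The delicate step is $(2)\Rightarrow(4)$: translating the failure of the length/geodesic property into a concrete finite-valued Lipschitz map on a subset $N\subseteq M$ whose norm-preserving extension is impossible. One must choose $N$ and the target finite set carefully so that \emph{any} $1$-Lipschitz extension is forced to hit a point playing the role of an approximate midpoint between two prescribed points of $M$, and then invoke the parallelogram-law computation to see that no such point exists in a non-convex closed subset. The remaining implications $(1)\Rightarrow(2)$, $(3)\Rightarrow(4)$, and $(4)\Rightarrow(3)$ (the latter being the geodesic-iff-length fact for complete spaces, already recorded in the excerpt via \cite[Theorem 3.2]{gpr18} once convexity is in hand) are routine.
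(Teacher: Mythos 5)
Your two substantive implications are correct and essentially coincide with the paper's proof: for (5)$\Rightarrow$(1) the paper likewise extends $f$ using the CEP of $H$ and composes with the nearest-point projection onto the closed convex set $M$, and for (4)$\Rightarrow$(5) the paper simply cites \cite[Theorem 2.9]{ikw}, whereas your parallelogram-law midpoint argument is a correct self-contained proof of that fact (completeness of $M$, which the paper assumes throughout, is exactly what you need to get $m\in M$ and to pass from midpoint convexity to convexity).

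The problem is your ``main obstacle''. You have misread items (1)--(3): each of them contains ``$M$ is geodesic'' as part of the hypothesis, so (1)$\Rightarrow$(2)$\Rightarrow$(3) are pure weakenings and (3)$\Rightarrow$(4) is just ``geodesic implies length''; this is why the paper dismisses (1)$\Rightarrow$(2)$\Rightarrow$(3)$\Rightarrow$(4) as immediate from the definitions. There is no implication ``finite CEP $\Rightarrow$ length'' to prove, and the statement you set out to prove in your (2)$\Rightarrow$(4) step is in fact false: a two-point subset $\{p,q\}\subseteq H$ has the CEP, hence the finite CEP (a Lipschitz map defined on a proper subset lives on at most one point and extends by a constant with the same, zero, Lipschitz constant, while a map defined on all of $M$ needs no extension), yet it is neither length nor convex. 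Consequently the sketched construction of a finite-valued Lipschitz map whose norm-preserving extension would force an approximate midpoint cannot be completed; the geodesic hypothesis in (1)--(3) is precisely what supplies the midpoints, and extension properties alone never will. Once the statement is read correctly, your proof reduces to (4)$\Rightarrow$(5) and (5)$\Rightarrow$(1), which you have, plus formal implications, and it closes; but as organized, the plan hinges on a lemma that is both unnecessary and untrue.
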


\begin{proof}
Observe that implications (1)$\Rightarrow$(2)$\Rightarrow$(3)$\Rightarrow$(4) are clear from the very definition.

Since $M$ is a subset of $H$, $M$ is convex if $M$ is length (see e.g. \cite[Theorem 2.9]{ikw}). This proves (4)$\Rightarrow$(5).

To prove (5)$\Rightarrow$(1), it is immediate that if $M$ is convex then $M$ is geodesic, so let us prove that $M$ has the CEP. Since $M$ is convex classical theory of Hilbert spaces implies the existence of a norm-one Lipschitz retraction $r:H\longrightarrow C$ which is consequence of the projection theorem. It is straightforward to conclude the CEP on $M$ from the CEP on $H$ and the retraction $r$. Indeed, given a subset $X\subseteq M$ and a Lipschitz function $f:X\longrightarrow M$, consider $i:M\longrightarrow H$ the inclusion map. Now $i\circ f:X\longrightarrow H$ is a $\Vert f\Vert$-Lipschitz function. Since $H$ has the CEP there exists an extension $F:H\longrightarrow H$ so that $\Vert F\Vert=\Vert f\Vert$. It remains to consider $r\circ F$ restricted to $M$.
\end{proof}

The situation for $L_1$-preduals is a bit more delicated. In general, there are $L_1$-predual spaces which do not enjoy the CEP (see e.g. \cite[P.59]{beli}, where it is established that there is no separable $L_1$-predual with the CEP). Our aim is to show that, for the finite CEP, the situation is different. In order to do so, let us introduce a bit of notation.

\begin{definition}\label{defi:injective}
Let $M$ be a metric space. We say that $M$ is:
\begin{enumerate}
\item \textit{injective} if for every pair of metric spaces $Y\subseteq X$ and every Lipschitz function $f:Y\longrightarrow M$ there exists a norm-preserving Lipschitz extension, that is, there exists $F:X\longrightarrow M$ so that $\Vert F\Vert=\Vert f\Vert$ and $F_{|Y}=f$.

\item \textit{finitely injective} if for every pair of finite metric spaces $Y\subseteq X$ and every Lipschitz function $f:Y\longrightarrow M$ there exists a norm-preserving Lipschitz extension, that is, there exists $F:X\longrightarrow M$ so that $\Vert F\Vert=\Vert f\Vert$ and $F_{|Y}=f$.
\end{enumerate}
\end{definition}

Injective metric spaces have been widely studied in the literature and include, for instance, the injective Banach spaces. We refer the reader to \cite[Chapter 1 and 2]{beli}, \cite{despa} and references therein for background. Clearly, injective metric spaces enjoy the CEP, but the converse is not true, for instance, by Proposition \ref{prop:carasubhilCEP}.

Observe that, from \cite{arpa1956}, a metric space $M$ is finitely injective if, and only if, $M$ is geodesic and enjoys the following property of intersections of balls: if $\{B(x_i,r_i): 1\leq i\leq n\}$ is a finite family of closed balls so that $B(x_i,r_i)\cap B(x_j,r_j)\neq \emptyset$ if $i\neq j$ then $\bigcap\limits_{i=1}^n B(x_i,r_i)\neq \emptyset$ (the previous condition in \cite{arpa1956} is $M$ being \textit{$m$-hyperconvex} for every $m\in\mathbb N$). Thanks to this characterisation, it is easy to see that $L_1$-predual Banach spaces are finitely injective \cite[Theorem 6.1]{linds64}.

Observe also that, with the previous definition, it is not clear that finite-injectivity implies finite-CEP. This is because the definition of finite injectivity allows extending Lipschitz functions where the domains are finite metric spaces. On the other hand, the definition of finite-CEP requires extending Lipschitz functions whose range is finite but the domain spaces can be infinite. Because of that, we will prove that finite injectivity actually allows extending Lipschitz functions with finite range.

For this, we start with the following lemma, whose proof can be found in \cite[Theorem 4.5 and the remark after]{linds64}.

\begin{lemma}\label{lemma:compip}
Let $M$ be a complete metric space. If $M$ has the finite intersection property, then $M$ has the \textit{compact intersection property}, i.e. if $\{B(x_i,r_i): i\in I\}$ is a family of pairwise intersecting balls and $\{x_i: i\in I\}$ is relatively compact in $M$, then $\bigcap\limits_{i\in I} B(x_i,r_i)\neq \emptyset$.
\end{lemma}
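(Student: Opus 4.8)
The plan is to obtain the common point as a limit of points furnished by the finite intersection property applied to larger and larger \emph{finite} subfamilies, using completeness of $M$ to pass to the limit and relative compactness of the set of centres to keep this limiting process under control. Two preliminary reductions are convenient. First, one may assume the centres form a \emph{compact} set: put $K:=\overline{\{x_i:i\in I\}}$ and, for $p\in K$, set $\rho(p):=\lim_{\delta\to 0^+}\inf\{r_i:\ i\in I,\ d(x_i,p)\le\delta\}$. A routine triangle-inequality argument shows that $\rho$ is lower semicontinuous on $K$, that $\{B(p,\rho(p)):p\in K\}$ is again pairwise intersecting, and that $\bigcap_{p\in K}B(p,\rho(p))=\bigcap_{i\in I}B(x_i,r_i)$; so we may work with this family. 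Second, the pairwise intersection condition $d(p,p')\le\rho(p)+\rho(p')$ forces $\rho$ to dominate the continuous function $p\mapsto\max\{0,d(p,p_0)-\min_K\rho\}$, where $p_0$ is a point at which the lower semicontinuous function $\rho$ attains its minimum on $K$; thus the radii cannot degenerate in an uncontrolled way, which is what will make the construction converge.

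Fix a summable sequence $\varepsilon_m\downarrow 0$. Using total boundedness of $K$, choose increasing finite sets $F_1\subseteq F_2\subseteq\cdots\subseteq K$ such that each $F_m$ is an $\varepsilon_m$-net of $K$ and, moreover, every $p\in K$ admits some $p'\in F_m$ with $d(p,p')\le\varepsilon_m$ \emph{and} $\rho(p')\le\rho(p)+\varepsilon_m$ (cover $K$ by finitely many balls of radius $\varepsilon_m/2$ and pick in each of them a near-minimiser of $\rho$). For each $m$, the finite intersection property yields a point $y_m\in\bigcap_{p\in F_m}B(p,\rho(p))$, and the crux of the proof is to make these choices compatibly, so that $(y_m)$ is Cauchy. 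Granting that, completeness gives $y:=\lim_m y_m\in M$, and then for every $p\in K$, picking $p'\in F_m$ as above,
\[
d(y,p)\ \le\ d(y,y_m)+d(y_m,p')+d(p',p)\ \le\ d(y,y_m)+\rho(p')+\varepsilon_m\ \le\ d(y,y_m)+\rho(p)+2\varepsilon_m;
\]
letting $m\to\infty$ gives $d(y,p)\le\rho(p)$, hence $y\in\bigcap_{p\in K}B(p,\rho(p))=\bigcap_{i\in I}B(x_i,r_i)$, as required.

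The main obstacle is exactly the claim that the $y_m$ can be chosen so that $(y_m)$ is Cauchy. The natural approach is inductive: given $y_m$, one checks that a sufficiently small closed ball around $y_m$ still meets each of the finitely many balls $B(p,\rho(p))$, $p\in F_{m+1}$, and then applies the finite intersection property to that enlarged finite family to produce $y_{m+1}$ with $d(y_m,y_{m+1})$ summably small; the lower bound on $\rho$ from the second reduction controls how much $\rho$ can drop between nearby centres and hence bounds the radius of the corrective ball. This bookkeeping is precisely what is carried out in \cite[Theorem 4.5 and the remark after]{linds64}, which I would invoke. As a sanity check, in a concrete space such as $c_0$ one can write the point $y$ down explicitly and coordinatewise as the Chebyshev centre $y_j=\tfrac12\big(\sup_i(x^{(i)}_j-r_i)+\inf_i(x^{(i)}_j+r_i)\big)$, the pairwise intersection condition guaranteeing this is well defined and relative compactness of the centres (uniform smallness of tails) guaranteeing $y\in c_0$; this is a useful model for the general argument.
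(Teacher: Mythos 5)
The paper gives no independent argument for this lemma: its proof consists exactly of the citation of \cite[Theorem 4.5 and the remark after]{linds64}. Your proposal, after a preliminary reduction to a compact set of centres via the radius function $\rho$, delegates the decisive step (choosing the points $y_m$ coherently so that they form a Cauchy sequence) to that very same reference, so it is essentially the same approach as the paper's; the only point to watch is that your claim that the reduced family $\{B(p,\rho(p)):p\in K\}$ is again pairwise intersecting needs more than the triangle inequality in a general complete metric space (the triangle inequality only yields $d(p,p')\le\rho(p)+\rho(p')$, which suffices in the geodesic setting where the lemma is actually applied, but not in general).
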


Now we have the following theorem.

\begin{theorem}\label{theo:caracompainj}
Let $M$ be a complete metric space. The following are equivalent:
\begin{enumerate}
\item $M$ is finitely injective.
\item $M$ is geodesic and has the compact intersection property.
\item For every pair of compact metric spaces $Y\subseteq X$ and every Lipschitz function $f:Y\longrightarrow M$ there exists a norm-preserving extension $F:X\longrightarrow M$. 
\end{enumerate}
\end{theorem}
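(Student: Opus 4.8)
The plan is to prove the cycle of implications $(1)\Rightarrow(2)\Rightarrow(3)\Rightarrow(1)$. The implication $(2)\Rightarrow(3)$ is the heart of the matter, and the implication $(3)\Rightarrow(1)$ is immediate (a finite metric space is compact, so the extension guaranteed in $(3)$ is exactly what finite injectivity demands).

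For $(1)\Rightarrow(2)$: finite injectivity is, by the Aronszajn--Panitchpakdi characterisation recalled just before the statement, equivalent to $M$ being geodesic together with the finite intersection property (pairwise intersecting finite families of balls have nonempty total intersection). Since $M$ is assumed complete, Lemma \ref{lemma:compip} upgrades the finite intersection property to the compact intersection property, which is precisely the content of $(2)$. (I would also note the trivial fact that geodesic passes unchanged between $(1)$ and $(2)$.)

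For $(2)\Rightarrow(3)$: given compact $Y\subseteq X$ and a Lipschitz $f\colon Y\to M$ with constant $L:=\|f\|$, I want to extend $f$ to all of $X$ without increasing the Lipschitz constant. The natural approach is a one-point-at-a-time (Kuratowski--Zorn) extension: I would show that for any metric space $X'$ with $Y\subseteq X'$, any $L$-Lipschitz $g\colon X'\to M$ whose image $g(X')$ is relatively compact, and any single new point $p\in X\setminus X'$, there is an $L$-Lipschitz extension $\tilde g$ of $g$ to $X'\cup\{p\}$ — and crucially $\tilde g(X'\cup\{p\}) = g(X')\cup\{\tilde g(p)\}$ is still relatively compact. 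The value $\tilde g(p)$ must lie in $\bigcap_{x\in X'} B\bigl(g(x),\,L\,d(p,x)\bigr)$; these balls are pairwise intersecting because $M$ is geodesic (given $x,y\in X'$, the triangle inequality $d(p,x)+d(p,y)\ge d(x,y)\ge d(g(x),g(y))/L$ lets one find, using a geodesic between $g(x)$ and $g(y)$, a point within $L\,d(p,x)$ of $g(x)$ and within $L\,d(p,y)$ of $g(y)$), and the centres $\{g(x):x\in X'\}$ form a relatively compact set, so the compact intersection property of $(2)$ gives a common point. Then I would run Zorn's lemma on the poset of $L$-Lipschitz extensions of $f$ to intermediate sets with relatively compact range; a chain has an upper bound (take the union of graphs — the range of the union is still contained in the closure of $f(Y)$, hence relatively compact), and a maximal element must have domain all of $X$ by the one-point step. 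One subtlety: to make the one-point step land inside a relatively compact set I should keep track of the fact that the whole construction stays inside $\overline{f(Y)}$, which is compact since $Y$ is compact and $f$ is continuous; so "relatively compact range" is automatically preserved throughout and I need not worry about it degenerating.

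The main obstacle I anticipate is precisely the verification that the family of balls $\{B(g(x),L\,d(p,x))\}_{x\in X'}$ is pairwise intersecting: this is where geodesy of $M$ is used, and one must be careful that when $d(x,y)=0$ forces $g(x)=g(y)$ the claim is trivial, and otherwise one parametrises a geodesic from $g(x)$ to $g(y)$ and picks the point at the correct proportion $\tfrac{L\,d(p,x)}{L\,d(p,x)+L\,d(p,y)}$ (adjusting when the denominator might cause issues, but $d(p,x)+d(p,y)\ge d(x,y)>0$ so it is fine). A second, more bookkeeping-type obstacle is handling the case where $p$ might coincide metrically with a point of $X'$ (distance zero), but then $f$ is forced and consistency is immediate. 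Once the one-point extension and the relative compactness bookkeeping are in place, Zorn's lemma closes the argument routinely.
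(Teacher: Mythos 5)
Your route is the same as the paper's: (1)$\Rightarrow$(2) via the Aronszajn--Panitchpakdi characterisation plus Lemma \ref{lemma:compip}, (3)$\Rightarrow$(1) trivially, and (2)$\Rightarrow$(3) by a Zorn one-point-extension argument where the value at a new point $p$ is taken in $\bigcap_{x}B\bigl(g(x),\Vert f\Vert d(p,x)\bigr)$, the balls being pairwise intersecting because $M$ is geodesic and the set of centres being relatively compact. The paper runs exactly this argument, with the poset consisting of pairs $(Z,g)$ where $Z$ is a \emph{closed} subset of the compact space $X$, so that the set of centres $g(Z)$ is compact outright.

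There is one incorrect justification in your bookkeeping, although the fact it is meant to support is true and the repair is immediate. You claim that ``the whole construction stays inside $\overline{f(Y)}$'', so that relative compactness of the range is automatic. That is false: the point supplied by the compact intersection property lies in the intersection of the balls but in general nowhere near $\overline{f(Y)}$. For instance, with $M=\mathbb R$, $Y=\{0,1\}$, $f=\mathrm{id}_Y$ (so $\Vert f\Vert=1$) and $X=\{0,1,p\}$ with $d(p,0)=d(p,1)=2$, the one-point step may legitimately assign $F(p)=2\notin\overline{f(Y)}$; in particular the ranges of the members of a chain need not sit inside a fixed compact set for the reason you give. The correct reason your scheme works is that every intermediate domain $X'$ is a subset of the compact space $X$, hence totally bounded, so its image under an $L$-Lipschitz map is totally bounded and therefore relatively compact in the complete space $M$; alternatively, restrict the poset to closed subsets $Z$ of $X$ (as the paper does), which are compact, so that the continuous image $F(Z)$ is compact. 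With either repair your argument coincides with the paper's proof.
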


\begin{proof}
(1)$\Rightarrow$(2) follows from the results of \cite{arpa1956} and by Lemma \ref{lemma:compip} and (3)$\Rightarrow$(1) is clear.

Let us now prove (2)$\Rightarrow$(3). Take $f$ as in the hypothesis and define $\mathfrak{F}$ to be the set of those pairs $(Z,g)$ so that $Z$ is a closed subset of $X$ containing $Y$ and $g:Z\longrightarrow M$ is a norm-preserving Lipschitz extension of $f$. Observe that $(Y,f)\in\mathfrak F$. Moreover, declare the usual order $(Z_1,g_1)\leq (Z_2,g_2)$ iff $Z_1\subseteq Z_2$ and $g_{2|Z_1}=g_1$. It is not difficult to prove that $(\mathfrak{F},\leq)$ is inductive.

By Zorn lemma take a maximal element $(Z,F)$. We claim that $Z=X$. Assume by contradiction that $Z\neq X$ and take $x\in X\setminus Z$. In order to define an extension of $F$ to $x$, note that the family $\{B(F(y),\Vert f\Vert d(y,x)): y\in Z\}$ in $M$ is a family of pairwise intersecting balls (this is because given two of these balls the distance between centers is smaller than the sum of radii, a condition which guarantees that the intersection of balls is non-empty in geodesic metric spaces) whose centers are a compact set (since $Z$ is compact as being closed in $X$). By assumption we get that there exists $m_0\in\bigcap\limits_{y\in Z} B(F(y), \Vert f\Vert d(y,x))$. Now defining $\hat F(x)=m_0$ we get a norm preserving extension of $F$ to $Z\cup\{x\}$. Indeed, given $y\in Z$ we get that $\hat F(x)=m_0\in B(f(y),\Vert f\Vert d(y,x))$, so $d(\hat F(x),\hat F(y))=d(m_0,F(y))\leq \Vert f\Vert d(x,y)=\Vert F\Vert d(x,y)$, which implies that $\Vert \hat F\Vert=\Vert f\Vert$. Consequently, $(Z\cup\{x\},\hat F)\in\mathfrak F$ and $(Z,F)<(Z\cup\{x\},\hat F)$, which contradicts the maximality of $(Z,F)$ and finishes the proof.
\end{proof}

For the following theorem we need to introduce a bit of notation. Given a metric space $M$, the \textit{injective envelope of $M$} is a pair $(\varepsilon M,i)$, where $\varepsilon M$ is an injective metric space and $i:M\longrightarrow \varepsilon M$ is an isometry and no proper subspace of $\varepsilon M$ contains $i(M)$.

Note that in \cite{isbell} it is proved that every metric space has an injective envelope which is, up to onto isometry, unique. Moreover, it is proved that the injective envelope of a compact metric space is also compact \cite[p.73]{isbell}.

Now we are ready to prove the following result.

\begin{theorem}\label{theo:carafinincompextension}
Let $M$ be a finitely injective metric space. Then, for every pair of metric spaces $Y\subseteq X$ and every Lipschitz mapping $f:Y\longrightarrow M$ so that $f(Y)$ is compact there exists a norm-preserving extension $F:X\longrightarrow M$ so that $F(X)$ is compact too.
\end{theorem}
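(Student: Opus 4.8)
The plan is to reduce the general extension problem to the compact case handled by Theorem \ref{theo:caracompainj}, using the injective envelope as an auxiliary device to control compactness of the range. Let $f:Y\longrightarrow M$ be Lipschitz with $f(Y)$ relatively compact (hence, after passing to the closure, compact since $M$ is complete), and let $Y\subseteq X$. First I would consider the injective envelope $(\varepsilon K, i)$ of the compact metric space $K:=\overline{f(Y)}$; by the cited result of Isbell, $\varepsilon K$ is itself compact, and since it is injective we obtain a norm-preserving extension $g:X\longrightarrow \varepsilon K$ of the composition $i\circ f:Y\longrightarrow \varepsilon K$. Crucially, $g(X)\subseteq \varepsilon K$ is contained in a compact set, so $g$ has relatively compact range while already being defined on all of $X$.

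The second step is to "push back" from $\varepsilon K$ into $M$. The composition $i:K\hookrightarrow \varepsilon K$ is an isometry, so $i^{-1}:i(K)\longrightarrow M$ is a norm-one (isometric) Lipschitz map whose domain $i(K)$ is a compact subset of the compact space $\overline{g(X)}$. Apply Theorem \ref{theo:caracompainj}, in the form of implication (1)$\Rightarrow$(3), to the pair of compact metric spaces $i(K)\subseteq \overline{g(X)}$ and the map $i^{-1}$: since $M$ is finitely injective, there is a norm-preserving extension $h:\overline{g(X)}\longrightarrow M$ of $i^{-1}$. Then $F:=h\circ g:X\longrightarrow M$ is the desired map: it is Lipschitz with $\Vert F\Vert\le \Vert h\Vert\,\Vert g\Vert=\Vert i\circ f\Vert=\Vert f\Vert$, it extends $f$ because for $y\in Y$ we have $g(y)=i(f(y))\in i(K)$ so $F(y)=h(i(f(y)))=i^{-1}(i(f(y)))=f(y)$ (whence also $\Vert F\Vert=\Vert f\Vert$), and its range $F(X)=h(g(X))$ is contained in the image under the Lipschitz map $h$ of the compact set $\overline{g(X)}$, hence relatively compact; replacing $M$'s ambient considerations, $\overline{F(X)}$ is compact.

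I expect the main obstacle to be purely a matter of bookkeeping rather than a deep difficulty: one must be careful that the injective envelope is taken over the \emph{compact} space $\overline{f(Y)}$ (not over $Y$, which may be non-compact and would destroy compactness of the envelope), and that the domains fed into Theorem \ref{theo:caracompainj} are genuinely compact — this forces the use of $\overline{g(X)}$ rather than $g(X)$, and relies on $M$ being complete so that $\overline{f(Y)}$ is compact. A minor point to verify is that $i^{-1}$ is $1$-Lipschitz, which is immediate since $i$ is an isometry, so the norm of the final composition is exactly $\Vert f\Vert$. Aside from these compactness hygiene issues, every step is a direct invocation of a previously established result.
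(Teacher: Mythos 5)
Your proposal is correct and follows essentially the same route as the paper: form the injective envelope of the compact image $f(Y)$, extend $i\circ f$ into the (compact) envelope by injectivity, then use Theorem \ref{theo:caracompainj} to extend the inverse embedding back into $M$ and compose. The only cosmetic difference is that you feed the pair $i(K)\subseteq \overline{g(X)}$ into Theorem \ref{theo:caracompainj}, whereas the paper uses $e(f(Y))\subseteq \varepsilon f(Y)$; both choices are compact and the argument is unaffected.
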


\begin{proof}
Let us assume with no loss of generality that $\Vert f\Vert=1$. Let us consider the injective envelop of the metric space $f(N)$, which is compact, and call $e:f(N)\longrightarrow \varepsilon f(Y)$ the canonical embedding. Call also $i:f(Y)\longrightarrow M$ the canonical inclusion. Now set $e\circ f:Y\longrightarrow \varepsilon f(Y)$, which is a norm-one Lipschitz function. Since $\varepsilon f(Y)$ is injective there exists a norm-preserving extension $G:X\longrightarrow \varepsilon f(Y)$. Observe that $G$ satisfies that $G=e\circ f$ on $Y$.

Now set $p:e(f(Y))\longrightarrow M$ so acting as $p(e(f(y)))=f(y)$ (i.e. it is $e^{-1}$ on $e(f(Y))$), which is a norm-one Lipschitz function. Moreover, $e(f(Y))$ is compact, and $\varepsilon f(Y)$ is compact too since $f(Y)$ is compact. By Theorem \ref{theo:caracompainj} there exists a norm-preserving extension $P:\varepsilon f(Y)\longrightarrow M$. Now set $F:=P\circ G:X\longrightarrow M$, which is a norm-one Lipschitz function. Moreover, given $y\in Y$, we get
$$P(G(y))=P(e(f(y)))=p(e(f(y)))=f(y),$$
so $F$ extends $f$ and the proof is finished.
\end{proof}

Let us obtain consequences of the above result.

\begin{remark}\label{rema:mejoraespinola}
\begin{enumerate}
\item From \cite[Theorem 2.4]{eslo} it is obtained that if $M$ is finitely injective then, given any pair of metric spaces $Y\subseteq X$ and every $f:Y\longrightarrow M$ so that $f(Y)$ is compact there exists, for every $\varepsilon>0$, a Lipschitz extension $F:X\longrightarrow M$ so that $F(X)$ is compact and so that $\Vert F\Vert\leq (1+\varepsilon)\Vert f\Vert$. Observe that Theorem \ref{theo:carafinincompextension} allows to get $\varepsilon=0$ in the above theorem.

\item If $X$ is a Banach space, then Theorem \ref{theo:carafinincompextension} should be compared with \cite[Theorem 3.5]{rueda21}. In that result, it is proved that $X$ is an $L_1$-predual if, and only if, for every Lipschitz function $f:N\longrightarrow X$ which is \textit{Lipschitz compact} (i.e. the set $\{\frac{f(x)-f(y)}{d(x,y)}: x\neq y\}\subseteq X$ is relatively compact) there exists, for every $\varepsilon>0$ and every $M\supset N$, an extension $F:M\longrightarrow X$ with $\Vert F\Vert\leq (1+\varepsilon)\Vert f\Vert$.
\end{enumerate}
\end{remark}

As a consequence of Theorem \ref{theo:carafinincompextension} we obtain that finitely injective metric spaces enjoy the finite-CEP. 

\begin{corollary}\label{coro:finiteCEP}
Let $M$ be a metric space. If $M$ is finitely injective then $M$ has the finite-CEP.
\end{corollary}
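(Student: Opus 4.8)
The plan is to derive Corollary \ref{coro:finiteCEP} directly from Theorem \ref{theo:carafinincompextension}, which we may assume freely. Suppose $M$ is finitely injective and let $f:N\longrightarrow M$ be a Lipschitz function with $N\subseteq M$ whose range $f(N)$ is finite. A finite set is compact, so the hypothesis of Theorem \ref{theo:carafinincompextension} is met with $Y:=N$ and $X:=M$; this yields a norm-preserving extension $F:M\longrightarrow M$, i.e. $F_{|N}=f$ and $\Vert F\Vert=\Vert f\Vert$. That is exactly the defining condition of the finite CEP in Definition \ref{defi:CEP}(2), so the proof is complete.

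The only point requiring a word of care is that Definition \ref{defi:CEP}(2) asks for extensions of Lipschitz functions $f:N\longrightarrow M$ with $N\subseteq M$, so the ambient space into which $N$ embeds is $M$ itself; Theorem \ref{theo:carafinincompextension}, on the other hand, is stated for an abstract pair $Y\subseteq X$. Specialising $X$ to $M$ and $Y$ to $N$ is legitimate because $N$ is literally a metric subspace of $M$, so nothing is lost. One might also note that Theorem \ref{theo:carafinincompextension} delivers the extra conclusion that $F(M)$ is compact, which is more than the finite CEP demands; we simply discard that surplus.

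Thus there is essentially no obstacle: the corollary is an immediate specialisation. The substantive work — reducing the compact-range extension problem to the finite-domain injectivity hypothesis via the (compact) injective envelope and Theorem \ref{theo:caracompainj} — has already been carried out in the proof of Theorem \ref{theo:carafinincompextension}, and Corollary \ref{coro:finiteCEP} is just the observation that ``finite range'' is a special case of ``compact range'' and that one is allowed to take the enveloping space equal to $M$.

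\begin{proof}
Let $f:N\longrightarrow M$ be a Lipschitz function with $N\subseteq M$ and $f(N)$ finite. Since every finite metric space is compact, $f(N)$ is compact, so we may apply Theorem \ref{theo:carafinincompextension} with $Y:=N$ and $X:=M$ (note $N$ is a metric subspace of $M$). This produces a norm-preserving extension $F:M\longrightarrow M$, that is, $F_{|N}=f$ and $\Vert F\Vert=\Vert f\Vert$. Hence $M$ has the finite CEP.
\end{proof}
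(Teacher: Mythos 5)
Your proof is correct and is essentially identical to the paper's own argument: finite range implies compact range, and Theorem \ref{theo:carafinincompextension} applied with $Y:=N$ and $X:=M$ yields the required norm-preserving extension. Nothing further is needed.
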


\begin{proof}
Given $N\subseteq M$ and a Lipschitz function $f:N\longrightarrow M$ so that $f(N)$ is finite then $f(N)$ is in particular compact, and now a norm-preserving extension $F:M\longrightarrow  M$ is guaranteed by Theorem \ref{theo:carafinincompextension}.
\end{proof}

In the case of Banach space, a complete characterisation is obtained from the fact that $L_1$-predual spaces are finitely injective.

\begin{corollary}\label{coro:finiteCEPBanach}
Let $X$ be a Banach space. 
\begin{enumerate}
\item If $X$ is strictly convex, then $X$ has the finite-CEP if, and only if, $X$ is a Hilbert space and if, and only if, $X$ has the CEP.
\item If $X$ is not strictly convex, then $X$ has the finite-CEP if, and only if, $X$ is an $L_1$-predual.
\end{enumerate}
\end{corollary}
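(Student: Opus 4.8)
The plan is to derive Corollary \ref{coro:finiteCEPBanach} by combining the structural results already recorded in the excerpt with Corollary \ref{coro:finiteCEP} and the characterisations of $L_1$-preduals and Hilbert spaces via extension and ball-intersection properties. The key inputs are: (i) the dichotomy from \cite[Section 2.3]{beli} (and the remark after the proof of \cite[Theorem 2.11]{beli}) that a Banach space with the finite-CEP is a Hilbert space if it is strictly convex and an $L_1$-predual otherwise; (ii) that Hilbert spaces have the CEP (hence the finite-CEP) by \cite[Theorem 1.12]{beli}; (iii) that $L_1$-preduals are finitely injective, via the ball-intersection characterisation of \cite[Theorem 6.1]{linds64} together with the discussion following Lemma \ref{lemma:compip}; and (iv) Corollary \ref{coro:finiteCEP}, which upgrades finite injectivity to the finite-CEP.

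For part (1), suppose $X$ is strictly convex. If $X$ has the finite-CEP, then by the quoted dichotomy (since $X$ is strictly convex) $X$ must be a Hilbert space. Conversely, if $X$ is a Hilbert space, then $X$ has the CEP by \cite[Theorem 1.12]{beli}, and the CEP trivially implies the finite-CEP (the latter only asks for extensions of Lipschitz maps with finite range). Finally, if $X$ has the CEP it certainly has the finite-CEP, and we have just closed the loop; this establishes the three-way equivalence. For part (2), suppose $X$ is not strictly convex. If $X$ has the finite-CEP then the dichotomy forces $X$ to be an $L_1$-predual. Conversely, if $X$ is an $L_1$-predual, then $X$ is finitely injective (by the ball-intersection characterisation of $L_1$-preduals in \cite[Theorem 6.1]{linds64} and the definition of finite injectivity), whence $X$ has the finite-CEP by Corollary \ref{coro:finiteCEP}.

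I expect the proof itself to be short, essentially an assembly of previously stated facts, so the only genuine subtlety — and the one I would be careful to flag — is making sure the cited dichotomy from \cite{beli} is invoked in the form that applies to the \emph{finite}-CEP rather than only to the CEP; this is exactly what the remark after \cite[Theorem 2.11]{beli} provides, and the excerpt already records this. A second minor point worth a sentence is that one should not assert that the two cases are mutually exhaustive of all Banach spaces with the finite-CEP in a misleading way: a space that is simultaneously an $L_1$-predual and strictly convex would be one-dimensional, so there is no real overlap, but the statement is organised by the strict-convexity hypothesis and no such caveat is needed in the write-up. With these points in mind the argument is a direct citation-and-combine, and no hard estimate or new construction is required.

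\begin{proof}
(1) Assume that $X$ is strictly convex. If $X$ has the finite-CEP, then by the dichotomy of \cite[Section 2.3]{beli} together with the remark after the proof of \cite[Theorem 2.11]{beli} (which shows that the dichotomy holds with the finite-CEP in place of the CEP), the space $X$ must be a Hilbert space. Conversely, if $X$ is a Hilbert space then $X$ has the CEP by \cite[Theorem 1.12]{beli}, and the CEP implies the finite-CEP directly from Definition \ref{defi:CEP}. Since the CEP trivially implies the finite-CEP, this closes the three equivalences.

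(2) Assume that $X$ is not strictly convex. If $X$ has the finite-CEP, then the same dichotomy as above yields that $X$ is an $L_1$-predual. Conversely, assume $X$ is an $L_1$-predual. By \cite[Theorem 6.1]{linds64} and Lemma \ref{lemma:compip}, $X$ satisfies the hypotheses making it finitely injective (see the discussion preceding Lemma \ref{lemma:compip}); hence by Corollary \ref{coro:finiteCEP} the space $X$ has the finite-CEP.
\end{proof}
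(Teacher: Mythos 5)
Your proof is correct and follows essentially the same route as the paper: both directions are obtained by combining the remark after \cite[Theorem 2.11]{beli} (in its finite-CEP form), \cite[Theorem 1.12]{beli} for Hilbert spaces, the fact that $L_1$-preduals are finitely injective via \cite[Theorem 6.1]{linds64}, and Corollary \ref{coro:finiteCEP}. The only cosmetic difference is that the paper routes the finite injectivity of $L_1$-preduals through Theorem \ref{theo:caracompainj}, while you invoke the ball-intersection discussion around Lemma \ref{lemma:compip}; these are the same argument.
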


\begin{proof}
If $X$ is stricly convex, if $X$ has the finite-CEP the comment after the proof of \cite[Theorem 2.11]{beli} yields that $X$ is a Hilbert space. For the converse, if $X$ is a Hilbert space then $X$ has the CEP by \cite[Theorem 1.12]{beli}.

If $X$ is not strictly convex, if $X$ has the finite-CEP then again the comment after the proof of \cite[Theorem 2.11]{beli} yields that $X$ is an $L_1$-predual. For the converse, \cite[Theorem 6.1, (13)]{linds64} implies that $X$ is finitely injective by Theorem \ref{theo:caracompainj}. Now Corollary \ref{coro:finiteCEP} implies that $X$ has the finite-CEP, as desired.
\end{proof}

\begin{remark}
Observe that every separable $L_1$-predual space $X$ is an example of metric space with the finite-CEP but failing the CEP. To the best of our knownledge, there is not a characterisation of those $L_1$-predual spaces with the CEP. It is conjectured in \cite[P. 44]{beli} that this class can coincide precisely with the injective Banach spaces.
\end{remark}

Let us now give a new characterisation of finitely injective metric spaces inspired by \cite[Theorem 6.1 (11)]{linds64}. In that result, it is proved that a Banach space $X$ is an $L_1$-predual if, and only if, $X$ has the MAP and for every compact operator $T:X\longrightarrow X$ there exists, for every $Z\supset X$ with $\operatorname{dim}(Z/X)=1$, a compact extension $\hat T:Z\longrightarrow X$ with $\Vert \hat T\Vert=\Vert T\Vert$. In such spirit, we have the following analogue for finitely injective metric spaces.

\begin{proposition}
Let $M$ be a complete metric space. Then $M$ is finitely injective if, and only if, the following two assertions hold:
\begin{enumerate}
\item For every finite subset $F\subseteq M$ there exists a norm-one Lipschitz function $f:M\longrightarrow M$ so that $f(M)$ is compact and $f(x)=x$ holds for every $x\in F$.
\item For every Lipschitz function $g:M\longrightarrow M$ so that $g(M)$ is compact there exists, for every $X\supset M$ so that $X\setminus M$ is a singleton, a norm-preserving extension $G:X\longrightarrow M$.
\end{enumerate}
\end{proposition}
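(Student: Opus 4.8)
The plan is to show both implications by isolating, in the definition of finite injectivity, the two genuinely independent ingredients: a ``local'' retraction statement and a ``one-point extension'' statement. For the forward implication, assume $M$ is finitely injective. Condition (1) is essentially the content of Theorem~\ref{theo:carafinincompextension} (together with Corollary~\ref{coro:finiteCEP}): given a finite subset $F\subseteq M$, view the identity on $F$ as a Lipschitz function $f:F\longrightarrow M$ with $f(F)$ finite, hence compact, and apply Theorem~\ref{theo:carafinincompextension} with $Y=F$ and $X=M$ to obtain a norm-preserving (so norm-one, since $\Vert \mathrm{id}_F\Vert=1$ as soon as $|F|\ge 2$; the trivial case $|F|\le 1$ is handled by a constant map) extension $f:M\longrightarrow M$ with $f(M)$ compact and $f(x)=x$ for all $x\in F$. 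Condition (2) is immediate: if $g:M\longrightarrow M$ has $g(M)$ compact and $X\supset M$ with $X\setminus M$ a single point $p$, then $g(M)$ being compact means $\{B(g(y),\Vert g\Vert\,d(y,p)):y\in M\}$ is a family of pairwise intersecting balls (the distance between two centers is at most $\Vert g\Vert(d(y,p)+d(y',p))$, using that $M$, being finitely injective, is geodesic by Theorem~\ref{theo:caracompainj}) whose centers lie in the compact set $g(M)$, so by the compact intersection property (again Theorem~\ref{theo:caracompainj}) the intersection is nonempty, and picking a point $m_0$ there and setting $G(p)=m_0$, $G|_M=g$ yields the required norm-preserving extension, exactly as in the last paragraph of the proof of Theorem~\ref{theo:caracompainj}.

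For the converse, assume (1) and (2); I must show $M$ is finitely injective, i.e. every norm-one Lipschitz $f:Y\longrightarrow M$ on a finite $Y\subseteq X$ (finite $X$) extends without increasing the norm. First apply (1) to the finite set $F:=f(Y)\subseteq M$ to get a norm-one Lipschitz $r:M\longrightarrow M$ with $r(M)$ compact and $r|_F=\mathrm{id}_F$. Then $g:=r:M\longrightarrow M$ has compact range, so condition~(2) applies to one-point extensions; the point is that $X$ is finite, so we can enumerate $X\setminus Y=\{x_1,\dots,x_k\}$ and build the extension one point at a time. Concretely, set $M_0:=M$ and, having a compact-range norm-one self-map agreeing with the data so far, adjoin $x_1$ to the metric space, use (2) to extend, then observe the new map still has compact range so (2) applies again to adjoin $x_2$, and so on. The subtlety is that (2) extends maps $M\longrightarrow M$ defined on \emph{all} of the ambient space minus a singleton, whereas here I am trying to extend a map defined only on $Y$; so I should first replace $f$ by $\tilde f:=r:M\longrightarrow M$, which agrees with $f$ after identifying $Y$ with $f(Y)$ is \emph{not} quite right — rather, I extend $f$ to a map on the finite metric space $Y\cup\{x_1\}$ using a pairwise-intersecting-balls argument directly (as $M$ is geodesic, being finitely injective by the forward direction applied... no — circularity).

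Let me restate the converse argument to avoid that trap. Given the finite data $f:Y\longrightarrow M$, I want a norm-preserving extension to the finite space $X$. Consider the free space / direct approach: since conditions (1)--(2) are hypotheses I may use freely, first note they imply $M$ is geodesic: given $x\ne y$ in $M$, apply~(2) to $g=\mathrm{id}_M$ (compact range? only if $M$ compact) — this does not work either, so instead I build the midpoint by adjoining one point to $M$ at the right distances and invoking~(2) with $g$ a suitable compact-range near-identity from~(1) applied to $F=\{x,y\}$: extend that $g$ over the new point $z$ with $d(z,x)=d(z,y)=\tfrac12 d(x,y)$ in the ambient space $M\cup\{z\}$, landing a point $m_0$ with $d(m_0,x),d(m_0,y)\le\tfrac12 d(x,y)$, whence $M$ is geodesic. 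With geodesicity in hand, the one-point extension within finite metric spaces is the standard pairwise-intersecting-balls argument (the family $\{B(f(y),d(y,x_1)):y\in Y\}$ has pairwise nonempty intersection because $d(f(y),f(y'))\le d(y,y')\le d(y,x_1)+d(y',x_1)$ and $M$ is geodesic), and iterating over the finitely many points of $X\setminus Y$ finishes it; condition~(1)'s compact-range self-maps are what let me keep re-applying~(2) in the geodesicity step without assuming $M$ is already nice. \textbf{The main obstacle} is precisely this bootstrapping: condition~(2) only extends maps defined on a co-singleton subset of the ambient space, so extracting from it the finitely-many-points extension needed for finite injectivity requires funneling everything through the compact-range retractions supplied by~(1), and one must check the geodesic-like ball-intersection facts are available at each stage rather than assumed; I expect the write-up to spend most of its effort verifying that the range of each intermediate extension stays compact so that~(2) remains applicable.
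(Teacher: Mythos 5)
Your forward direction is fine and essentially the paper's: condition (1) comes from Theorem \ref{theo:carafinincompextension} applied to the identity on the finite set, and condition (2) from the compact intersection property (or again from Theorem \ref{theo:carafinincompextension} with $Y=M$). Your derivation of geodesicity from (1) and (2) --- adjoin an external midpoint $z$, extend a compact-range map fixing $x,y$ over $M\cup\{z\}$, and take the image of $z$ --- is also the paper's argument (the paper realizes $z$ inside $\ell_\infty(I)$, which incidentally spares you the need to justify that the one-point metric extension with the prescribed distances exists).

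The genuine gap is in the last step of the converse. You claim that, once $M$ is geodesic, the one-point extension of $f:Y\longrightarrow M$ ($Y$ finite) follows from ``the standard pairwise-intersecting-balls argument'' because the family $\{B(f(y),\Vert f\Vert d(y,x_1)):y\in Y\}$ is pairwise intersecting. But the extension requires a point lying in \emph{all} of these balls simultaneously, and geodesicity only guarantees that any \emph{two} of them meet; it does not give a common point. This is exactly the finite (Helly-type) ball intersection property, the second half of the Aronszajn--Panitchpakdi characterisation of finite injectivity, and it genuinely fails in geodesic spaces in general (in the Euclidean plane three pairwise intersecting discs can have empty intersection). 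So your argument proves geodesicity but never establishes finite hyperconvexity, and the iteration you sketch cannot get off the ground. The missing property must itself be extracted from (1) and (2) by the same adjoin-a-point trick you used for midpoints: given pairwise intersecting balls $B(x_i,r_i)$ in $M$, embed $M$ isometrically in $\ell_\infty(I)$, use hyperconvexity of $\ell_\infty(I)$ to find $z$ with $d(z,x_i)\leq r_i$ for all $i$, take by (1) a norm-one compact-range $f:M\longrightarrow M$ fixing every $x_i$, extend it by (2) to $F:M\cup\{z\}\longrightarrow M$, and observe $F(z)\in\bigcap_i B(x_i,r_i)$; then finite injectivity follows from the characterisation in \cite{arpa1956}, with no point-by-point extension (and no bookkeeping of compact ranges of intermediate extensions) needed at all.
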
 

\begin{proof}
It is immediate from Theorem \ref{theo:carafinincompextension} that if $M$ is finitely injective then it satisfies (1) and (2). For the converse, assume that $M$ satisfies (1) and (2) and let us prove that it is finitely injective. In order to do so, let us prove that $M$ is geodesic and satisfies the finite intersection property of balls in virtue of the results of \cite[Section 2]{arpa1956}. We consider $M$ as a subset of $\ell_\infty(I)$ for certain subset $I$ (observe that $M$ can be embedded isometrically in an $\ell_\infty(I)$ space, for instance considering the isometry $\delta:M\hookrightarrow \mathcal F(M)$ and now by the well known fact that any Banach space embeds isometrically in an $\ell_\infty(I)$).

\textit{$M$ is geodesic}: Let $x\neq y\in M$ and let us  prove that $B\left(x,\frac{d(x,y)}{2}\right)\cap B\left(y,\frac{d(x,y)}{2}\right)\neq \emptyset$. By (1) find a norm-one Lipschitz map $f:M\longrightarrow M$ so that $f(M)$ is compact and $f(x)=x, f(y)=y$. Find $z\in \ell_\infty(I)$ so that $z\in B_{\ell_\infty(I)}\left(x,\frac{d(x,y)}{2}\right)\cap B_{\ell_\infty(I)}\left(y,\frac{d(x,y)}{2}\right)$. By (2) there exists a norm-one preserving extension $F:M\cup\{z\}\longrightarrow M$. Observe that $F(z)\in B\left(x,\frac{d(x,y)}{2}\right)\cap B\left(y,\frac{d(x,y)}{2}\right)$. This proves that $M$ is geodesic by completeness.
\vspace{0.3cm}

\textit{$M$ has the finite intersection property}: Let $B(x_i,r_i)$, $1\leq i\leq n$ be a family of closed balls which is pairwise intersecting in $M$. Let $z\in \ell_\infty(I)$ be an element in $\bigcap\limits_{i=1}^n B_{\ell_\infty(I)}(x_i,r_i)$. By (1) take a norm-one Lipschitz function $f:M\longrightarrow M$ so that $f(M)$ is compact and $f(x_i)=x_i$. By (2) there exists an extension $F:M\cup \{z\}\longrightarrow M$. It follows that $F(z)\in \bigcap\limits_{i=1}^n B(x_i,r_i)$, and we are done.
\end{proof}

\begin{remark}
Observe that condition (1) is kind of a metric version of the metric approximation property. On the other hand, observe that we can not replace ``$f(M)$ is compact'' with ``$f(M)$ is finite''. This follows because if $M$ is finitely injective then it is path-connected as being geodesic. Consequently, every Lipschitz mapping $f:M\longrightarrow M$ taking finitely many values is constant.
\end{remark}

In order to exhibit examples, apart from the context of Banach spaces, where being finitely injective is the same as being an $L_1$-predual, let us point out that in \cite{despa} a characterisation of those subsets of $\ell_\infty(I)$ which are injective is given. Since every metric space $M$ embeds isometrically in an $\ell_\infty(I)$ set for a suitable $I$, this gives a theoretical characterisation of all the injective metric spaces. In order to give particular examples of finitely injective metric spaces, let us consider the following example.

\begin{example}
Let $M$ be a complete finitely injective metric space. Let $B(z_i,R_i), i\in I$ be a family of balls which are pairwise intersecting and so that the set $\{z_i: i\in I\}\subseteq M$ is compact. Then $X:=\bigcap\limits_{i\in I}B(z_i,R_i)$ is finitely injective. In particular, closed balls in a finitely injective metric space are finitely injective.
\end{example}

\begin{proof}
We have to prove that $X$ is geodesic and that it satisfies the finite intersection of properties of balls. It is immediate that if $X$ reduces to a singleton then both properties are clearly satisfied.

On the other hand, let us assume that it has more than one point. Let us start by proving that $X$ is geodesic. To this end since $X$ is complete as being a closed subset of the complete space $M$, take $x\neq y\in X$ and let us prove that $B_X\left(x,\frac{d(x,y)}{2}\right)\cap B_X\left(y,\frac{d(x,y)}{2}\right)\neq \emptyset$. In order to do so, consider the family of balls $\{B(z_i,R_i):i\in I\}\cup\left\{B\left(z,\frac{d(x,y)}{2}\right): z\in\{x,y\}\right\}$. This family of balls is pairwise intersecting ($B\left(x,\frac{d(x,y)}{2}\right)\cap B\left(y,\frac{d(x,y)}{2}\right)\neq \emptyset$ since $M$ is geodesic as being finitely injective). Moreover, the set $\{x_i:i\in I\}\cup\{x,y\}$ is compact because it is the union of two compact subsets of $M$. By Lemma \ref{lemma:compip} we derive that $B\left(x,\frac{d(x,y)}{2}\right)\cap B\left(y,\frac{d(x,y)}{2}\right)\cap\bigcap\limits_{i\in I} B(z_i,R_i)=B\left(x,\frac{d(x,y)}{2}\right)\cap B\left(y,\frac{d(x,y)}{2}\right)\cap X\neq \emptyset$. But this precisely means that $B_X\left(x,\frac{d(x,y)}{2}\right)\cap B_X\left(y,\frac{d(x,y)}{2}\right)\neq \emptyset$. Summarising, we have proved that every pair of points in the complete metric space $X$ has a medium point. Consequently, $X$ is geodesic.

In order to prove that $X$ has the finite intersection property, take $B_X(x_j,r_j), 1\leq j\leq k$ be a family of pairwise intersecting balls in $X$. Let us prove that $\emptyset\neq \bigcap\limits_{j=1}^k B_X(x_j,r_j)=\bigcap\limits_{j=1}^k B(x_j,r_j)\cap \bigcap\limits_{i\in I} B(z_i,R_i)=\bigcap\limits_{j=1}^k B(x_j,r_j)\cap \bigcap \limits_{i\in I} B(z_i,R_i)$. But this is consequence of Lemma \ref{lemma:compip} since the family $\{B(z_i,R_i):i\in I\}\cup\{B(x_j,r_j): 1\leq j\leq k\}$ is a family of pairwise intersecting balls whose set of centers $\{z_i:i\in I\}\cup\{x_1,\ldots x_k\}$ is compact.
\end{proof}

\section{Spaces of Lipschitz functions and the Daugavet property}\label{section:Daugavet}

In this section we will exhibit how the finite CEP and the CEP allows to obtain consequences about the Daugavet property in spaces of vector-valued Lispchitz functions. Let us start with the first theorem of the section.

\begin{theorem}\label{theo:daugaF(M)finCEP}
Let $M$ be a geodesic pointed metric space with the finite-CEP. If $X$ is a non-zero Banach space, then $\mathcal F(M)\pten X$ has the Daugavet property.
\end{theorem}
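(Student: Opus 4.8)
The plan is to use the slice characterisation of the Daugavet property, specifically condition (2) of Theorem~\ref{theo:cadaDaugavet} applied to $\mathcal F(M)\pten X$. So I fix a norm-one element $u\in S_{\mathcal F(M)\pten X}$, a slice $S=S(B_{\mathcal F(M)\pten X},\Phi,\alpha)$ with $\Phi\in S_{(\mathcal F(M)\pten X)^*}=S_{L(\mathcal F(M),X^*)}=S_{\Lip(M,X^*)}$, and $\varepsilon>0$, and I look for $v\in S$ with $\Vert u+v\Vert>2-\varepsilon$. The standard reduction (as in the known results on the Daugavet property of $\mathcal F(M)$, e.g. \cite{gpr18,ikw}) is that it suffices to treat elementary molecules: there is no loss of generality in assuming $u$ has the form $\frac{\delta_p-\delta_q}{d(p,q)}\otimes x_0$ for suitable $p\neq q\in M$ and $x_0\in S_X$, after first approximating $u$ by a finitely supported tensor and then, using that $M$ is geodesic (equivalently, length, equivalently has property (Z)), subdividing to reduce to a single molecule of small "diameter". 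One must be a little careful that this reduction genuinely works in the tensor product; the key point is that finitely supported tensors are dense and that $B_{\mathcal F(M)}$ is the closed convex hull of its molecules, so a convexity/segment argument as in \cite[Lemma 3.7 and around]{gpr18} or \cite[Section 3]{lr2020} lets us localise.

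Next, having fixed the molecule $m_{pq}:=\frac{\delta_p-\delta_q}{d(p,q)}$ and $x_0\in S_X$, and writing $f\in\Lip(M,X^*)$ for the functional $\Phi$, I need to produce $z\in M$ with $d(p,z),d(q,z)$ both roughly $\frac{d(p,q)}{2}$ (length property) and a direction $x_1\in S_X$ so that $v:=m_{pz}\otimes x_1$ (or a short combination of such molecules) lies in the slice, i.e. $\langle f, m_{pz}\rangle(x_1)$ is close to $1$, and simultaneously $\Vert u+v\Vert$ is close to $2$. The mechanism for the norm lower bound is the "bump" construction: since $\mathcal F(M)$ has the Daugavet property we can arrange $\Vert m_{pq}+m_{p'q'}\Vert$ close to $2$ in $\mathcal F(M)$ for appropriately chosen nearby molecules, and then tensoring against the \emph{same} vector $x_0$ keeps this; the subtlety is that the slice forces a possibly different direction $x_1$, so I will instead use the finite-CEP of $M$ to build a single $\Lip(M,X^*)$ function realising the slice value on $m_{pz}$ while being "flat" elsewhere. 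Concretely: the value $\langle\Phi,u\rangle$ being near $1$ means there is $x^*\in S_{X^*}$ with $f(p)(x_0)-f(q)(x_0)$ near $d(p,q)$; I choose $z$ by the length property, then I want a scalar Lipschitz function $g$ on the two-point-image situation that I can glue. The finite-CEP enters exactly here: I have a partially defined contraction from a subset $N\subseteq M$ into a \emph{finite} subset of $M$ (the relevant images being finitely many prescribed points), and finite-CEP extends it norm-preservingly to all of $M$, which is what lets me manufacture a Lipschitz function on $M$ with prescribed behaviour on $\{p,q,z\}$ and controlled norm, hence a functional $\Psi$ on $\mathcal F(M)\pten X$ certifying that $u+v$ has norm near $2$.

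The cleanest way to organise the last step is to verify condition (3) or (5) of Theorem~\ref{theo:cadaDaugavet} instead of (2): given $u,v\in B_{\mathcal F(M)\pten X}$ and $\varepsilon>0$, produce a net converging weakly to $v$ inside $(1+\varepsilon)B$ with norms of $u+\cdot$ staying above $2-\varepsilon$. Reducing to $v$ a molecule-tensor, one replaces $v$ by $m_{pz}\otimes x_1$ with $z$ from the length property and uses that $m_{pz}\to$ (something) weakly while $\Vert m_{pq}\otimes x_0 + m_{pz}\otimes x_1\Vert$ stays large — and the largeness is checked by pairing with a norm-one functional in $\Lip(M,X^*)$ built via finite-CEP to separate the two molecules by a near-$2$ amount on their ($\leq 3$-point) joint support. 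I expect the \textbf{main obstacle} to be precisely this gluing: ensuring that the Lipschitz function witnessing $\Vert u+v\Vert\approx 2$ can be taken with Lipschitz constant essentially $1$ while prescribing values at the three points $p,q,z$ and compatible with the given slice functional — this is where geodesicity (to control the two-point metric geometry so that a contraction onto a finite image exists) and the finite-CEP (to extend that contraction to all of $M$) must be combined carefully, and where earlier "norm-preserving extension of vector-valued Lipschitz maps" hypotheses in \cite{gpr18} would have been used but must now be circumvented using only the weaker \emph{finite} CEP on the scalar side.
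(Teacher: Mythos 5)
There is a genuine gap, and it sits at the very first step: the claimed ``no loss of generality'' that the \emph{fixed} element $u\in S_{\mathcal F(M)\pten X}$ may be taken of the form $\frac{\delta_p-\delta_q}{d(p,q)}\otimes x_0$. Density of finitely supported tensors only lets you replace $u$ by a finite sum $\sum_i\mu_i\otimes x_i$; passing further to a single molecule tensor is not an approximation argument but a convex-hull argument, and the Daugavet criterion (2) of Theorem~\ref{theo:cadaDaugavet} cannot be tested only on a set whose closed convex hull is the ball: the condition ``$B\subseteq\overline{\operatorname{conv}}\{w:\Vert u+w\Vert>2-\varepsilon\}$'' must hold for \emph{every} $u\in S$, and it does not pass to convex combinations of the $u$'s (the relevant function $u\mapsto\sup_{w\in S}\Vert u+w\Vert$ is convex, which gives the wrong inequality). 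The convex-hull reduction is legitimate only in the other variable: for the element to be found in the slice, or for the weak-limit target $v$ in condition (5) — and indeed in your condition-(5) reformulation you correctly reduce $v$ to a molecule tensor, but then you again write $u$ as $m_{pq}\otimes x_0$, so the same gap reappears. Since handling an arbitrary $u$ (equivalently, an arbitrary functional $f\in S_{\Lip(M,X^*)}$) is precisely the hard part of the theorem, the proposal in effect assumes away the difficulty; the three-point gluing you single out as the ``main obstacle'' is comparatively routine.

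The paper sidesteps this by verifying condition (3) of Theorem~\ref{theo:cadaDaugavet} in the dual $(\mathcal F(M)\pten X)^*=\Lip(M,X^*)$: the fixed element is an \emph{arbitrary} $f\in S_{\Lip(M,X^*)}$, and only the weak-star slice-defining element is reduced, by density, to $z=\sum_{i=1}^p\delta_{m_i}\otimes x_i$. The real mechanism is then a composition (not extension) trick: geodesicity gives, via \cite[Proposition 2.3]{ikw}, points $x_0$ (avoiding $m_1,\dots,m_p,0$) at which $f$ almost attains its norm \emph{locally}; the finite CEP is used only to build Lipschitz self-maps $\varphi,\psi:M\to M$ with finite image (Lemmas~\ref{lemma:fininyextidentidad} and~\ref{lemma:finiteceplemalocalF(M)dau}) — identity on $\{m_1,\dots,m_p,0\}$ and constant near $x_0$, respectively constant off a small ball around $x_0$ and fixing two points where $f$ has big local slope — and one sets $g:=h\circ\varphi+(f\circ\psi-(f\circ\psi)(0))$, with the norm of $g$ controlled by the disjoint-support estimate of Lemma~\ref{lemma:compuortolip}. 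This composition device is what lets one work with a completely general $f$ and circumvent any vector-valued extension hypothesis; your sketch gestures at extending a map into a finite subset of $M$, which is the right use of the finite CEP, but without the dual-side formulation and the local-norm-attainment localization it does not reach the general case. To repair your approach you would have to drop the molecule reduction for $u$ entirely and, in essence, reconstruct the paper's argument.
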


The proof is very involved, so we will split the construction in a number of lemmata of independent interest.

\begin{lemma}\label{lemma:fininyextidentidad}
Let $M$ be a metric space with the finite-CEP. Let $m_1,\ldots, m_p,x_0$ be different points in $M$ and $\eta>0$. There exists a Lipschitz function $\varphi:M\longrightarrow M$ with $\Vert \varphi\Vert\leq 1+\eta$ so that $\varphi(m_i)=m_i$ for $1\leq i\leq p$ and $\varphi=x_0$ on $B(x_0,r)$, for suitable $r>0$ small enough.
\end{lemma}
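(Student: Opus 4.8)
\textbf{Proof plan for Lemma \ref{lemma:fininyextidentidad}.}

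The plan is to construct $\varphi$ as a norm-preserving (up to the factor $1+\eta$) extension, in the sense of the finite-CEP, of a carefully chosen Lipschitz map defined on a finite set together with the small ball $B(x_0,r)$. First I would fix $r>0$ small enough that the balls $B(x_0,r)$ and $\{m_i\}$ ($1\le i\le p$) are far apart; concretely, choosing $r<\frac{1}{3}\min_i d(x_0,m_i)$ and also $r<\frac{1}{3}\min_{i\neq j}d(m_i,m_j)$ will give enough room. Consider the subset $N:=\{m_1,\ldots,m_p\}\cup B(x_0,r)\subseteq M$ and define $f:N\longrightarrow M$ by $f(m_i)=m_i$ and $f(z)=x_0$ for $z\in B(x_0,r)$. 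The range $f(N)=\{m_1,\ldots,m_p,x_0\}$ is finite, so the finite-CEP will give a norm-preserving extension of $f$ to all of $M$ — provided we first check that $\|f\|$ is close to $1$, since the finite-CEP preserves the Lipschitz norm exactly and we only want $\|\varphi\|\le 1+\eta$.

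The heart of the argument is thus the estimate on $\|f\|$. For two points $m_i,m_j$ we have $d(f(m_i),f(m_j))=d(m_i,m_j)$, fine. For $z,w\in B(x_0,r)$, $d(f(z),f(w))=0$. The only nontrivial pairs are $z\in B(x_0,r)$ and $m_i$: there $d(f(z),f(m_i))=d(x_0,m_i)$ while $d(z,m_i)\ge d(x_0,m_i)-r$. Hence the local Lipschitz ratio on such a pair is at most
\[
\frac{d(x_0,m_i)}{d(x_0,m_i)-r},
\]
which tends to $1$ uniformly in $i$ as $r\to 0$ (there are only finitely many indices $i$). So by shrinking $r$ further if necessary we can guarantee $\|f\|\le 1+\eta$. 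Applying the finite-CEP to $f$ (after rescaling the target metric, or simply noting that the finite-CEP statement in Definition \ref{defi:CEP} applies verbatim to $f$ whose range is finite) yields a norm-preserving extension $\varphi:M\longrightarrow M$ with $\|\varphi\|=\|f\|\le 1+\eta$, $\varphi(m_i)=m_i$, and $\varphi\equiv x_0$ on $B(x_0,r)$, which is exactly what we want.

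The main obstacle I anticipate is the interaction between the exact norm-preservation in the finite-CEP and the desired slack $1+\eta$: one must be careful that $\|f\|$ is genuinely controlled by $1+\eta$ and not larger (for instance that there is no hidden large ratio coming from pairs $z\in B(x_0,r)$, $m_i$ when $d(x_0,m_i)$ is small — which is precisely why $r$ must be chosen relative to $\min_i d(x_0,m_i)$), and that the hypothesis "$m_1,\ldots,m_p,x_0$ different points" is used to ensure $N$ really is a disjoint union of the ball and the finite set so that $f$ is well-defined. A minor point is completeness: by the convention in Section \ref{section:preliminaries} $M$ is complete, so $B(x_0,r)$ is a genuine closed ball and no pathology arises. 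Everything else is the routine verification that the finite-CEP applies and that the extension has the stated properties.
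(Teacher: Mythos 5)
Your proposal is correct and follows essentially the same route as the paper: define the map on $B(x_0,r)\cup\{m_1,\ldots,m_p\}$ sending the ball to $x_0$ and fixing the $m_i$, bound the only nontrivial Lipschitz ratios (ball point versus $m_i$) by $1+\frac{r}{R-r}$ with $R=\min_i d(x_0,m_i)$ and $r$ small, and then invoke the finite-CEP on this finite-range map to extend norm-preservingly to all of $M$. The small extra condition $r<\frac13\min_{i\neq j}d(m_i,m_j)$ is harmless but unnecessary.
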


\begin{proof} Define $R:=\min\{d(x_0,m_i): 1\leq i\leq p\}$, take $0<r<R$ so that $\frac{r}{R-r}<\eta$ and define $\varphi:B(x_0,r)\cup\{m_{i}: 1\leq i\leq p\}\longrightarrow M$ by $\varphi=x_0$ on $B(x_0,r)$ and $\varphi(m_i)=m_{i}$ for $1\leq i\leq m$. Let us estimate the norm of $\varphi$. In order to do so,  and take $x\neq y\in B(x_0,r)\cup\{m_{i}: 1\leq i\leq p\}$, and let us estimate $\frac{d(\varphi(x),\varphi(y))}{d(x,y)}$. The non-trivial case is when $x\in B(x_0,r)$ and $y\in\{ m_1,\ldots, m_p\}$. In such case
$$\frac{d(\varphi(x),\varphi(y))}{d(x,y)}=\frac{d(y,x_0)}{d(x,y)}\leq \frac{d(x,y)+d(x,x_0)}{d(x,y)}=1+\frac{d(x,x_0)}{d(x,y)}\leq 1+\frac{r}{R-r}<1+\eta$$
 Now, since $M$ has the finite CEP and $\varphi$ is a Lipschitz function of finite image, $\varphi$ can be extended in a norm-preserving way (still denoted by $\varphi$) to $\varphi:M\longrightarrow M$, which satisfies all our requirements.
\end{proof}

In the previous lemma we have found, roughly speaking, Lipschitz functions which are constant on a ball and the identity at finitely many points out of this ball. In the following lemma we make a kind of reverse process, we find Lipschitz functions which are constant out of a ball and acts as the identity at some points of the ball.

\begin{lemma}\label{lemma:finiteceplemalocalF(M)dau}
Let $M$ be a metric space with the finite CEP. Then, for every $x_0\in M$, every $\delta>0$ and every $x,y\in M, x\neq y$ so that $\max\left\{\frac{d(x_0,x)}{\delta-d(x_0,x)},\frac{d(x_0,y)}{\delta-d(x_0,y)}\right\}<1$ there exists a Lipschitz function $\psi:M\longrightarrow M$ so that $\Vert\psi\Vert\leq 1$, $\psi=x_0$ on $M\setminus B(x_0,\delta)$, $\psi(x_0)=x_0$, $\psi(x)=x$ and $\psi(y)=y$.
\end{lemma}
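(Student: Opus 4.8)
The idea is to build $\psi$ as a ``truncation toward $x_0$'' on the ball $B(x_0,\delta)$: define a function on the finite set $\{x_0,x,y\}$ together with the exterior $M\setminus B(x_0,\delta)$, check it is a $1$-Lipschitz map into $M$, and then invoke the finite-CEP (the range is the finite set $\{x_0,x,y\}$) to extend it to all of $M$. Concretely, I would set $\psi$ to equal $x_0$ on the (closed) set $M\setminus B(x_0,\delta)$, and $\psi(x_0)=x_0$, $\psi(x)=x$, $\psi(y)=y$. So the domain of the preliminary map is $N:=(M\setminus B(x_0,\delta))\cup\{x_0,x,y\}$ and its image is the finite set $\{x_0,x,y\}\subseteq M$.

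\textbf{Key steps.} The only thing to verify is that this preliminary $\psi$ is $1$-Lipschitz on $N$. There are a few cases. If both points lie in $M\setminus B(x_0,\delta)$, then $\psi$ is constant there, so the ratio is $0$. If both points lie in $\{x_0,x,y\}$, then $\psi$ is the identity, so the ratio is $1$. The genuine case is a point $z\in\{x,y\}$ (note $\psi(x_0)=x_0$ is consistent with the exterior value, so $x_0$ contributes nothing new) and a point $w\in M\setminus B(x_0,\delta)$, where $d(w,x_0)\geq\delta$. Then, writing $z=x$ for definiteness,
\[
\frac{d(\psi(w),\psi(x))}{d(w,x)}=\frac{d(x_0,x)}{d(w,x)}\leq\frac{d(x_0,x)}{d(w,x_0)-d(x_0,x)}\leq\frac{d(x_0,x)}{\delta-d(x_0,x)}<1,
\]
where the middle inequality uses the triangle inequality $d(w,x)\geq d(w,x_0)-d(x_0,x)\geq\delta-d(x_0,x)>0$ (the positivity being exactly the hypothesis $d(x_0,x)<\delta$, which is implied by $\frac{d(x_0,x)}{\delta-d(x_0,x)}<1$). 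The same estimate with $y$ in place of $x$ handles the remaining subcase. Hence $\Vert\psi\Vert\leq 1$ on $N$. Since $\psi(N)=\{x_0,x,y\}$ is finite, the finite-CEP of $M$ provides a norm-preserving extension $\psi:M\longrightarrow M$ with $\Vert\psi\Vert\leq 1$, retaining all the prescribed values, and this is the desired map.

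\textbf{Main obstacle.} There is no serious obstacle; the content is entirely in arranging the domain so that the exterior value $x_0$ and the interior prescriptions are mutually $1$-Lipschitz, and the hypothesis $\max\{\frac{d(x_0,x)}{\delta-d(x_0,x)},\frac{d(x_0,y)}{\delta-d(x_0,y)}\}<1$ is precisely what is needed for that. One mild point to be careful about: $x$ or $y$ could a priori coincide with $x_0$, but the hypothesis $x\neq y$ together with the case analysis still goes through (if, say, $x=x_0$ the prescription $\psi(x)=x$ is the same as $\psi(x_0)=x_0$), so no separate argument is required. The use of the finite-CEP here is legitimate because the extended function's \emph{range} is finite, which is exactly the weaker hypothesis in Definition \ref{defi:CEP}(2), rather than the domain.
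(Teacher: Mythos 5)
Your proposal is correct and follows essentially the same route as the paper: define the map on $\{x_0,x,y\}\cup\bigl(M\setminus B(x_0,\delta)\bigr)$ with the prescribed values, check it is $1$-Lipschitz (your case analysis, using $d(w,x_0)\geq\delta$ and the hypothesis on $x,y$, is exactly the verification the paper leaves as ``not difficult''), and then extend by the finite-CEP since the range $\{x_0,x,y\}$ is finite. No discrepancies worth noting.
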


\begin{proof}
Given $\delta>0$, $x_0,x,y$ as in the hypothesis define $\psi: \{0,x,y\}\cup M\setminus B(0,\delta)\longrightarrow M$ by $\psi(x)=x$, $\psi(y)=y$ and $\psi=x_0$ otherwise. It is not difficult to prove that $\psi$ is $1$-Lipschitz. Since $M$ has the finite CEP and the image of $\psi$ is finite, we can extend it in a norm preserving way to a function (still denoted by $\psi$) defined on the whole $M$, which satisfies all our requirements. \end{proof}

We end with a lemma which estimates the norm of the sum of two functions when the sets where each function is not constant are very separated.

\begin{lemma}\label{lemma:compuortolip}
Let $M$ be a geodesic metric space and $X$ be a Banach space. Let $f,g:M\longrightarrow X$ be two  Lipschitz functions with $\Vert f\Vert\leq 1$ and $\Vert g\Vert\leq 1$. Assume that there exists $m\in M$ and $0<\delta<R$ so that
\begin{enumerate}
\item $g$ is constant on $B(m,R)$.
\item $f(x)=f(m)$ holds for every $x\in M\setminus B(m,\delta)$.
\end{enumerate}
Then $\Vert f+g\Vert\leq 1+\frac{\delta}{R-\delta}.$
\end{lemma}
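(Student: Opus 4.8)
The plan is to estimate $\Vert (f+g)(x)-(f+g)(y)\Vert / d(x,y)$ for two arbitrary distinct points $x,y\in M$, splitting into cases according to where $x$ and $y$ sit relative to the ball $B(m,\delta)$ and the larger ball $B(m,R)$. First I would dispose of the easy cases. If both $x,y$ lie outside $B(m,\delta)$, then $f(x)=f(y)=f(m)$, so $(f+g)(x)-(f+g)(y)=g(x)-g(y)$ and the quotient is at most $\Vert g\Vert\le 1$. If both $x,y$ lie inside $B(m,R)$, then $g$ is constant there, so $(f+g)(x)-(f+g)(y)=f(x)-f(y)$ and the quotient is at most $\Vert f\Vert\le 1$. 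In both situations the bound $1\le 1+\frac{\delta}{R-\delta}$ holds trivially.

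The remaining, genuine case is when (after possibly relabelling) $x\in B(m,\delta)$ and $y\in M\setminus B(m,R)$; more generally one point is in the small ball and the other is outside the big ball, since if one point is in $B(m,\delta)\subseteq B(m,R)$ and the other is in $B(m,R)$ we are in the second easy case above. Here the idea is to insert an intermediate point using geodesy: since $M$ is geodesic and $d(x,y)\ge d(m,y)-d(m,x)\ge R-\delta>0$, I would pick a point $z$ on a geodesic (or metric midpoint iteration) from $x$ to $y$ lying on the sphere $\{w:d(m,w)=R\}$, or more simply a point $z$ with $d(m,z)$ between $\delta$ and $R$ such that $d(x,z)+d(z,y)=d(x,y)$; actually it is cleanest to take $z$ with $d(x,z)=t$ and $d(z,y)=d(x,y)-t$ for the value of $t$ making $z$ land where $g$ is still constant and $f$ is already constant, i.e. on the annulus boundary $d(m,z)=\delta$ won't do — rather I want $z$ with $d(m,z)\le R$ (so $g(z)=g(m)$) and $d(m,z)\ge \delta$ (so $f(z)=f(m)$). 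Such a $z$ exists on a geodesic from $x$ to $y$: the function $w\mapsto d(m,w)$ along the geodesic is continuous, equals $d(m,x)<\delta$ at $x$ and $d(m,y)>R$ at $y$, so it takes every value in $[\delta,R]$; pick $z$ with $d(m,z)=\delta$... wait, that forces $f(z)=f(m)$ only on the closed complement — I should instead take $z$ with $d(m,z)$ anywhere in $[\delta,R]$, say at the first crossing of level $\delta$, but to be safe I'll take $z$ with $d(m,z)=R$ if that still satisfies $f(z)=f(m)$ (yes, since $R>\delta$ means $z\notin B(m,\delta)$). Concretely: let $z$ be the point on the geodesic from $x$ to $y$ with $d(m,z)=\delta$ (so $z\notin B(m,\delta)^{\circ}$, hence $f(z)=f(m)$) — but I need strict exteriority; alternatively pick $z$ with $d(m,z)$ slightly larger than $\delta$, then let it tend to $\delta$. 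I'll present it as: choose $z$ on the geodesic with $\delta\le d(m,z)\le R$, so $f(z)=f(m)=f(x)$ (wait, $f(x)$ need not equal $f(m)$ since $x\in B(m,\delta)$) — so I must be careful: $f(x)$ is arbitrary, only $f(z)=f(m)$ and $g(z)=g(m)=g(y)$ are known.

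With such $z$ in hand, the estimate is
$$\Vert (f+g)(x)-(f+g)(y)\Vert \le \Vert (f+g)(x)-(f+g)(z)\Vert + \Vert (f+g)(z)-(f+g)(y)\Vert.$$
For the first summand, $g(z)=g(m)$ but $g(x)$ may differ (if $x$ is far... no, $x\in B(m,\delta)\subseteq B(m,R)$ so $g(x)=g(m)=g(z)$), hence $(f+g)(x)-(f+g)(z)=f(x)-f(z)$, bounded by $\Vert f\Vert d(x,z)\le d(x,z)$. For the second summand, $f(z)=f(m)$ and $f(y)=f(m)$ (as $y\notin B(m,\delta)$), so $(f+g)(z)-(f+g)(y)=g(z)-g(y)$, bounded by $d(z,y)$. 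Since $z$ is on a geodesic, $d(x,z)+d(z,y)=d(x,y)$, so the numerator is at most $d(x,y)$ and the quotient is at most $1$. Hmm — that gives the bound $1$, even better than claimed, so something in my case split is off: the constant $\frac{\delta}{R-\delta}$ must come from a case I have not yet isolated, namely when $z$ cannot be taken on a geodesic because $x$ and $y$ are \emph{both} relevant but a geodesic from $x$ to $y$ might pass outside the region of control, or when only a \emph{length}-space approximation is available. The main obstacle, and the place to be careful, is therefore the case $x\in B(m,\delta)$, $y\in B(m,R)\setminus B(m,\delta)$: here $g(x)=g(y)=g(m)$ so again $(f+g)(x)-(f+g)(y)=f(x)-f(y)$, bounded by $d(x,y)$ — still fine. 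I suspect the factor $\frac{\delta}{R-\delta}$ actually arises from an approximate-midpoint argument replacing exact geodesics (if one only assumed $M$ length), or from the triangle-inequality slack $d(x,z)\le d(x,m)+d(m,z)$ when no geodesic through the annulus is used; so in the plan I would use the geodesic hypothesis to find $z$ on the boundary sphere of radius $r\in(\delta,R)$ collinear between $x$ and $y$, and the worst case will pin down the constant. I expect the writing-up obstacle to be precisely choosing $z$ so that it simultaneously lies where $f$ is constant and where $g$ is constant, and bookkeeping the resulting three-term triangle inequality; the geodesic property of $M$ is exactly what makes this possible, and the claimed constant $1+\frac{\delta}{R-\delta}$ should fall out of the one case where the intermediate point must be produced by the triangle inequality rather than sitting on a geodesic segment joining $x$ and $y$ directly.
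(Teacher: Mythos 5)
Your plan is correct once you settle, as you eventually do, on taking the intermediate point $z$ on a geodesic joining $x$ and $y$ with $d(m,z)=R$: then $z\in B(m,R)$ gives $g(z)=g(m)=g(x)$ (since $x\in B(m,\delta)\subseteq B(m,R)$), while $d(m,z)=R>\delta$ gives $f(z)=f(m)=f(y)$, and your three cases (both points in $B(m,R)$; both outside $B(m,\delta)$; after relabelling, $x\in B(m,\delta)$ and $y\notin B(m,R)$) are exhaustive. Your closing worry that ``something in the case split is off'' is unfounded: in the mixed case your decomposition yields $\Vert (f+g)(x)-(f+g)(y)\Vert\le \Vert f(x)-f(z)\Vert+\Vert g(z)-g(y)\Vert\le d(x,z)+d(z,y)=d(x,y)$, so you actually prove the stronger bound $\Vert f+g\Vert\le 1$, which of course implies the stated estimate; there is no hidden case producing $\frac{\delta}{R-\delta}$. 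The paper's proof is genuinely different at exactly this point: it reduces to the same mixed case, but instead of interpolating along a geodesic from $x$ to $y$ it places an auxiliary point at distance $R$ from $m$ on a geodesic from $m$ to the far point, obtaining $\Vert g(x)-g(m)\Vert\le d(x,m)-R$ and $\Vert f(y)-f(m)\Vert\le d(y,m)\le\delta$, and then bounds the denominator crudely by $d(x,y)\ge d(x,m)-d(y,m)\ge d(x,m)-\delta$; that last inequality is the sole source of the extra term $\frac{\delta}{R-\delta}$, which is harmless because in the applications (Theorems \ref{theo:daugaF(M)finCEP}, \ref{theo:daugaLipgeneral} and \ref{theo:OHCEPtarget}) $\delta$ is chosen so that $\frac{\delta}{R-\delta}$ is small anyway. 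So your route buys a sharper constant at no extra cost (the geodesic hypothesis is used in both arguments, yours through the intermediate value theorem along one geodesic, the paper's through a geodesic emanating from $m$); when writing it up, simply drop the speculative final remarks about length-space approximations and state the clean three-case argument.
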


\begin{proof}
Let $x, y\in M$ with $x\neq y$. Let us estimate $A:=\frac{\Vert (f(x)+g(x))-(f(y)+g(y))\Vert}{d(x,y)}=\frac{\Vert f(x)-f(y)+g(x)-g(y)\Vert}{d(x,y)}$. Let us observe that if $f(x)-f(y)=0$ or $g(x)-g(y)=0$ then $A\leq 1$.

The unique possibility for the previous condition not to hold is that, up to relabeling, $x\notin B(m,R)$ and $y\in B(m,\delta)$. In that case $f(x)=f(m)$ and $g(y)=g(m)$. Consequently
$$A\leq \frac{\Vert f(y)-f(m)\Vert+\Vert g(x)-g(m)\Vert}{d(x,y)}\leq \frac{d(y,m)+\Vert g(x)-g(m)\Vert}{d(x,y)}$$
As $M$ is geodesic we can find a point $z\in M$ so that $d(z,m)=R$ and $d(x,m)=d(x,z)+R$ (take an isometric curve $\alpha:[0,d(x,m)]\longrightarrow M$ so that $\alpha(0)=m$ and $\alpha(d(x,m))=x$, and $z=\alpha(R)$ satisfies the requirements). Since $z\in B(m,R)$ we derive that $g(m)=g(z)$ because $g$ is constant on $B(m,R)$, so $\Vert g(x)-g(m)\Vert=\Vert g(x)-g(z)\Vert\leq d(x,z)=d(x,m)-R$. Consequently
$$A\leq \frac{d(x,m)-R+d(y,m)}{d(x,y)}\leq \frac{d(x,m)-R+d(y,m)}{d(x,m)-d(y,m)}\leq \frac{d(x,m)-R+d(y,m)}{d(x,m)-\delta}$$
Since $R>\delta$ we get $\frac{d(x,m)-R}{d(x,m)-\delta}\leq 1$, hence
$$A\leq 1+\frac{d(y,m)}{d(x,m)-\delta}\leq 1+\frac{\delta}{R-\delta},$$
and the lemma is proved by the abritrariness of $x,y\in M, x\neq y$.
\end{proof}

Now we are ready to show the pending proof.

\begin{proof}[Proof of Theorem \ref{theo:daugaF(M)finCEP}]
In view of the equality $(\mathcal F(M)\pten X)^*=L(\mathcal F(M),X^*)=\Lip(M,X^*)$, it is enough to prove that, given $f\in S_{\Lip(M,X^*)}$, a weak-star slice $S=S(B_{\Lip(M,X^*)},z,\alpha)$ and let us prove that there are elements $g\in S$ so that $\Vert f+g\Vert$ is a close to $2$ as we wish. In order to do so, pick $\varepsilon>0$.

We can assume, by a density argument (observe that $\{\delta_t:t\in M\}$ has a dense linear span in $\mathcal F(M)$ and $\{\mu\otimes x: \mu\in \mathcal F(M), x\in X\}$ has a dense span in $\mathcal F(M)\pten X$), that $z=\sum_{i=1}^p \delta_{m_i}\otimes x_i$ for suitable $p\in\mathbb N, m_1,\ldots, m_p\in M$ and $x_1,\ldots, x_p\in X$. Take $\eta_0>0$ and $h\in S$ with $h(z)=\sum_{i=1}^p h(m_i)(x_i)>(1+\eta_0)(1-\alpha)$. We can assume with no loss of generality that $(1+\eta_0)(1-\varepsilon)<1$.

Since $M$ is geodesic then there are infinitely many points $a_n$ with the following property: for every $\delta>0$ it follows $\Vert f_{|B(a_n,\delta)}\Vert>(1+\eta_0)(1-\varepsilon)$. Indeed, take $x\in S_X$ so that $\Vert x\circ f\Vert>(1+\eta_0)(1-\varepsilon)$, where $x\circ f:M\longrightarrow \mathbb R$ is defined by $(x\circ f)(t):=f(t)(x)$. Since $M$ is geodesic, the existence of the points $a_n$ is guaranteed by \cite[Proposition 2.3]{ikw}.

 Take one such point (say $x_0$) with so that $x_0\notin\{m_1,\ldots, m_p,0\}$.  Let $R:=\min\{d(x_0,m_1),\ldots, d(x_0,m_p), d(x_0,0)\}$. Apply  Lemma \ref{lemma:fininyextidentidad} for $0<\eta<\frac{\eta_0}{2}$  and find $\varphi:M\longrightarrow M$ and $r>0$ satisfying $\Vert \varphi\Vert\leq 1+\eta$, $\varphi(z)=z$ for $z\in \{m_1,\ldots, m_p, 0\}$ and $\varphi=x_0$ on $B(x_0,r)$. Now take $0<\delta<r$ and find $x,y\in B(0,\delta)$ with $x\neq y$, so that $\max\{\frac{d(x_0,x)}{\delta-d(x_0,x)}, \frac{d(x_0,y)}{\delta-d(x_0,y)}\}<1$ and such that $\frac{\Vert f(x)-f(y)\Vert}{d(x,y)}>(1+\eta_0)(1-\varepsilon)$. Observe that such points $x,y$ exist because of the assumption on $x_0$. Find, by Lemma \ref{lemma:finiteceplemalocalF(M)dau}, a norm-one Lipschitz function $\psi:M\longrightarrow M$ so that $\psi(z)=x_0$ for every $z\in M\setminus B(x_0,\delta)$, $\psi(x_0)=x_0, \psi(x)=x$ and $\psi(y)=y$.

Consider $g:=h\circ\varphi+(f\circ\psi-(f\circ\psi)(0)):M\longrightarrow X^*$. Observe that $g(x)=h(x_0)+f(x)-(f\circ\psi)(0)$ and $g(y)=h(x_0)+f(y)-(f\circ\psi)(0)$. Consequently $g(x)-g(y)=f(x)-f(y)$ and then $\Vert f+g\Vert\geq 2\frac{\Vert f(x)-f(y)\Vert}{d(x,y)}>2(1+\eta_0)(1-\varepsilon)$. On the other hand, it is clear that $f\circ\psi-(f\circ\psi)(0)$ equals $0$ in $M\setminus B(x_0,\delta)$, so $g(z)=\sum_{i=1}^p h(\varphi(m_i))(x_i)=\sum_{i=1}^p h(m_i)(x_i)>(1+\eta_0)(1-\alpha)$ since $m_i\notin B(0,\delta)$. We claim that $\Vert g\Vert\leq 1+\eta+\frac{\delta}{r-\delta}$. Indeed, it remains to apply Lemma \ref{lemma:compuortolip} to $\frac{h\circ\varphi}{1+\eta}$ and $(f\circ\psi-(f\circ\psi)(0))$ and making an easy perturbation argument.

Now we can select $\eta$ and $\delta$ so that $1+\eta+\frac{\delta}{r-\delta}<1+\eta_0$. Now $\frac{g}{1+\eta_0}\in B_{\Lip(M,X^*)}$ satisfies that
$$\frac{g}{(1+\eta_0)}(z)>1-\alpha,$$
so $\frac{g}{1+\eta_0}\in S$. Moreover,
\[
\begin{split}
\left\Vert f+\frac{g}{1+\eta_0} \right\Vert\geq \Vert f+g\Vert-\left\Vert g-\frac{g}{1+\eta_0}\right\Vert>2(1+\eta_0)(1-\varepsilon)-\eta_0.
\end{split}
\]
Since $\varepsilon$ was arbitrary and $\eta_0>0$ can be taken as small as desired we conclude the desired result.
\end{proof}

Let us now pass to the second main theorem of the section. Under the stronger condition that $M$ actually has the CEP, we get the following result.

\begin{theorem}\label{theo:daugaLipgeneral}
Let $M$ be a pointed geodesic metric space with the CEP. Then $L(\mathcal F(M),X)=\Lip(M,X)$ has the Daugavet property for every Banach space $X$.
\end{theorem}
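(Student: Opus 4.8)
The plan is to follow the template of the proof of Theorem~\ref{theo:daugaF(M)finCEP}, but now working directly inside $\Lip(M,X)=L(\mathcal F(M),X)$ rather than on a predual. We use the characterisation of the Daugavet property from Theorem~\ref{theo:cadaDaugavet}(2): fix $f\in S_{\Lip(M,X)}$, a slice $S=S(B_{\Lip(M,X)},\Phi,\alpha)$ of $B_{\Lip(M,X)}$ (so $\Phi\in S_{(\Lip(M,X))^*}$), and $\varepsilon>0$; we must produce $g\in S$ with $\Vert f+g\Vert>2-\varepsilon$. The key difference with the previous theorem is that $\Phi$ is now an arbitrary functional on $\Lip(M,X)$, not a weak-star one, so we can no longer use the density argument to assume $\Phi$ has a nice finitely-supported form. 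Instead, I would simply pick $h\in S$ directly (i.e. $\Phi(h)>1-\alpha$) and build $g$ from $h$ in a way that keeps $g$ inside $S$; the point of the CEP (as opposed to the finite CEP) is precisely that it lets us extend the \emph{full} Lipschitz function $h\colon M\to X$ composed with suitable retractions, rather than only finite-valued pieces.

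Here is the construction I would carry out. Since $M$ is geodesic and $\Vert f\Vert=1$, as in the previous proof pick $x\in S_{X^*}$... — wait, we are in $\Lip(M,X)$ now, so instead pick a direction realising (almost) the norm of $f$: choose $x^*\in S_{X^*}$ with $\Vert x^*\circ f\Vert>1-\varepsilon/4$, and by \cite[Proposition 2.3]{ikw} get infinitely many points $a_n\in M$ such that $\Vert f_{|B(a_n,\delta)}\Vert>1-\varepsilon/4$ for all $\delta>0$. Fix one such point $x_0\notin\{0\}$ with $d(x_0,0)=:R>0$. Now the CEP furnishes: (i) a norm-one Lipschitz map $\varphi\colon M\to M$ with $\varphi=x_0$ on a small ball $B(x_0,r)$ and $\varphi(0)=0$ — this needs the finite CEP only, via Lemma~\ref{lemma:fininyextidentidad} applied to the single point $0$, and in fact with $\eta$ as small as we like; and (ii) for $\delta<r$ and points $x\neq y\in B(x_0,\delta)$ chosen so that $\Vert f(x)-f(y)\Vert/d(x,y)>1-\varepsilon/4$ and the ratio conditions of Lemma~\ref{lemma:finiteceplemalocalF(M)dau} hold, a norm-one $\psi\colon M\to M$ with $\psi=x_0$ off $B(x_0,\delta)$, $\psi(x_0)=x_0$, $\psi(x)=x$, $\psi(y)=y$. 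Then set
\[
g:=h\circ\varphi+\bigl(f\circ\psi-(f\circ\psi)(0)\bigr).
\]
The first summand vanishes at $0$ and equals $h\circ\varphi$ which agrees with $h$ wherever $\varphi$ does; the second vanishes off $B(x_0,\delta)$ and at $0$. Exactly as before, $g(x)-g(y)=f(x)-f(y)$, so $\Vert f+g\Vert\geq 2\Vert f(x)-f(y)\Vert/d(x,y)>2-\varepsilon/2$, and Lemma~\ref{lemma:compuortolip} applied to $h\circ\varphi$ (scaled by $1+\eta$) and $f\circ\psi-(f\circ\psi)(0)$ gives $\Vert g\Vert\le 1+\eta+\frac{\delta}{r-\delta}$, which is $\le 1+\eta_0$ for suitable small $\eta,\delta$.

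The remaining — and genuinely new — difficulty is showing $g/(1+\eta_0)\in S$, i.e.\ that $\Phi(g/(1+\eta_0))>1-\alpha$, since $\Phi$ is an arbitrary bounded functional and we can no longer evaluate it coordinatewise. The resolution I expect: because $f\circ\psi-(f\circ\psi)(0)$ is supported in $B(x_0,\delta)$ and $\Vert h\circ\varphi-h\Vert$ can be made arbitrarily small in the $\Lip$-norm by taking $r$ (hence the region where $\varphi\ne\mathrm{id}$) small and $\eta$ small — here one must check that $\varphi\to\mathrm{id}$ in a controlled way, which is the technical heart — we get $\Vert g-h\Vert_{\Lip}$ small, so $\Phi(g)$ is close to $\Phi(h)>1-\alpha$, and then $\Phi(g/(1+\eta_0))>1-\alpha$ after adjusting constants. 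Thus the main obstacle is arranging the CEP-extensions $\varphi$ so that $h\circ\varphi$ is $\Lip$-close to $h$ uniformly (not just pointwise close), which is where having the \emph{full} CEP rather than the finite CEP is essential: we need $\varphi$ to be the identity outside a genuinely small ball and a near-isometry everywhere, and then $\Vert h\circ\varphi - h\Vert\le \Vert h\Vert\,\Vert \varphi-\mathrm{id}\Vert_{\Lip}$ together with a diameter estimate on the small ball controls the difference. Once that perturbation estimate is in hand, the final chain of inequalities
\[
\Bigl\Vert f+\tfrac{g}{1+\eta_0}\Bigr\Vert \ge \Vert f+g\Vert - \Bigl\Vert g-\tfrac{g}{1+\eta_0}\Bigr\Vert > 2-\tfrac{\varepsilon}{2}-\eta_0 > 2-\varepsilon
\]
closes the argument, and letting $\varepsilon,\eta_0\to 0$ yields the Daugavet property of $\Lip(M,X)$ for every nonzero $X$ (the case $X=\{0\}$ being vacuous, or rather excluded).
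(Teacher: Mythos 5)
Your overall template (compose a function in the slice with a map collapsing a small ball, and add a localized copy of $f$) is the same as the paper's, but the step where you certify that $g$ lies in the slice has a genuine gap, and it is exactly the point where the $\Lip(M,X)$ case differs from the $\mathcal F(M)\pten X$ case. You propose to get $\Phi(g)$ close to $\Phi(h)$ for an \emph{arbitrary} $\Phi\in\Lip(M,X)^*$ by making $\Vert g-h\Vert$ small in the Lipschitz norm. That is impossible: $g-h$ contains the summand $f\circ\psi-(f\circ\psi)(0)$, which by construction has Lipschitz norm greater than $1-\varepsilon/4$ (it reproduces the increment $f(x)-f(y)$ over $d(x,y)$), so $\Vert g-h\Vert$ stays bounded away from $0$ no matter how small the ball is. Moreover, even the other summand $h\circ\varphi-h$ is not small in Lipschitz norm: if $\varphi$ is the identity off $B(x_0,r)$ and constant on a smaller ball, then on that smaller ball $h\circ\varphi-h$ oscillates like $-h$, so its Lipschitz constant can be as large as $\Vert h\Vert=1$; shrinking the ball only shrinks the \emph{sup} norm, and sup-norm smallness gives no control of $\Phi$ for a general functional on $\Lip(M,X)$ (in $\Lip_0([0,1])\cong L_\infty[0,1]$ a functional can be concentrated, via derivatives, near $x_0$ and assign value close to $1$ to your bump). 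The inequality $\Vert h\circ\varphi-h\Vert\le\Vert h\Vert\,\Vert\varphi-\mathrm{id}\Vert_{\Lip}$ you invoke is not meaningful in a metric space and is false for the Lipschitz norm even in the linear setting. Note also that Lemma \ref{lemma:fininyextidentidad}, which you use to build $\varphi$, only fixes finitely many points; to have $h\circ\varphi=h$ off a ball you need $\varphi=\mathrm{id}$ there, i.e. an extension of a map with infinite image — this is Lemma \ref{lema:daugaextindenlocons} and is indeed where the full CEP enters, as you guessed, but it still does not rescue the norm estimate.

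The paper avoids this obstruction by not working with a single slice at all: it uses characterisation (5) of Theorem \ref{theo:cadaDaugavet}. Given $f,g\in S_{\Lip(M,X)}$, it chooses infinitely many points $x_n$ where $f$ locally almost attains its norm, pairwise disjoint balls $B(x_n,R_n)$ avoiding $0$, and, via Lemma \ref{lema:daugaextindenlocons} (full CEP), maps $\varphi_n$ equal to the identity off $B(x_n,R_n)$ and constant on $B(x_n,\mu_n)$; it then sets $f_n:=g\circ\varphi_n+(f\circ\psi_n-(f\circ\psi_n)(0))$. Each $f_n$ satisfies $\Vert f_n\Vert\le 1+\varepsilon$ and $\Vert f+f_n\Vert\ge 2(1-\varepsilon)$ essentially by your computation, but membership in a prescribed slice is replaced by weak convergence: the differences $g-f_n$ form a bounded sequence with pairwise disjoint supports contained in the disjoint balls, hence are weakly null by \cite[Lemma 1.5]{ccgmr}, so $f_n\to g$ weakly. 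This ``infinitely many disjoint bumps'' device is the missing idea in your proposal; without it (or some other mechanism controlling an arbitrary functional), the single-bump construction cannot be forced into a given slice.
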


This time we need just a preliminary lemma, which is a strengthening of Lemma \ref{lemma:fininyextidentidad}.

\begin{lemma}\label{lema:daugaextindenlocons}
Let $M$ be a metric space with the CEP. For every $\varepsilon>0$, every $m\in M$ and every $R>0$ there exists $\delta>0$ and a Lipschitz function $f:M\longrightarrow M$ with $\Vert f\Vert\leq 1+\varepsilon$, $f(x)=x$ for every $x\notin B(x,R)$ and $f(x)=m$ on $B(m,\delta)$.
\end{lemma}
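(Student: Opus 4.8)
The plan is to prove Lemma \ref{lema:daugaextindenlocons} by combining the two features already extracted in Lemma \ref{lemma:fininyextidentidad}-type arguments, but now using the full CEP rather than the finite CEP, so that we can keep the identity on an \emph{infinite} set. First I would fix $\varepsilon>0$, $m\in M$ and $R>0$, and look for a suitable radius $\delta>0$ to be specified at the end. The natural candidate function is to define $f$ on the subset
$$N:=B(m,\delta)\cup\bigl(M\setminus B(m,R)\bigr)$$
by $f\equiv m$ on $B(m,\delta)$ and $f(x)=x$ for $x\in M\setminus B(m,R)$, and then to invoke the CEP (whose extensions are norm-preserving) to extend $f$ to all of $M$. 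The only work is to estimate $\Vert f\Vert$ on $N$.

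For the norm estimate I would distinguish the obvious cases. If $x,y$ both lie in $B(m,\delta)$ then $f(x)-f(y)=0$, so the quotient is $0$. If $x,y$ both lie outside $B(m,R)$ then $f(x)=x$, $f(y)=y$ and the quotient is exactly $1$. The only nontrivial case is $x\in B(m,\delta)$ and $y\notin B(m,R)$ (up to relabelling), where $d(f(x),f(y))=d(m,y)$ while $d(x,y)\geq d(m,y)-d(m,x)\geq d(m,y)-\delta$. Hence
$$\frac{d(f(x),f(y))}{d(x,y)}\leq\frac{d(m,y)}{d(m,y)-\delta}\leq\frac{R}{R-\delta},$$
using that $d(m,y)>R$ and that $t\mapsto t/(t-\delta)$ is decreasing. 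Therefore $\Vert f\Vert\leq R/(R-\delta)$ on $N$. It then suffices to choose $0<\delta<R$ small enough that $R/(R-\delta)\leq 1+\varepsilon$, e.g.\ $\delta<\varepsilon R/(1+\varepsilon)$. With this $\delta$ fixed, $f$ is $(1+\varepsilon)$-Lipschitz on $N$, and since $M$ has the CEP there is a norm-preserving extension $f:M\longrightarrow M$, still denoted $f$, with $\Vert f\Vert\leq 1+\varepsilon$, $f\equiv m$ on $B(m,\delta)$ and $f(x)=x$ for $x\notin B(m,R)$, as required. (One should also note the trivial degenerate cases: if $B(m,R)=M$ one may take any $\delta\le R$ and the function constant equal to $m$; and one may always shrink $\delta$ so that $B(m,\delta)\subsetneq B(m,R)$, which is harmless.)

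I do not expect any genuine obstacle here: the statement of the lemma is essentially just ``$N$ carries an explicit $(1+\varepsilon)$-Lipschitz map, and CEP extends it''. The one point to be slightly careful about is that the two pieces defining $f$ on $N$ must be metrically compatible, i.e.\ the piecewise definition really is Lipschitz across the ``gap'' between $B(m,\delta)$ and $M\setminus B(m,R)$ — this is exactly the computation above, and it is why we need $\delta<R$ strictly (so that the gap has positive width and the denominator $d(m,y)-\delta$ stays bounded away from $0$ relative to $R$). The fact that CEP, unlike merely the finite CEP, allows $f$ to agree with the identity on the whole infinite set $M\setminus B(m,R)$ is precisely the extra strength that will later be needed in the proof of Theorem \ref{theo:daugaLipgeneral}.
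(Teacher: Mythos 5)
Your proposal is correct and is essentially the paper's own argument: define the map on $B(m,\delta)\cup(M\setminus B(m,R))$ as $m$ on the small ball and the identity outside the large ball, bound the Lipschitz constant in the only nontrivial mixed case (your bound $R/(R-\delta)$ is the same as the paper's $1+\delta/(R-\delta)$), choose $\delta$ small, and extend by the CEP. No gaps.
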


\begin{proof}
Take $0<\delta<R$, set $A:=B(m,\delta)\cup M\setminus B(m,R)$ and define $f:A\longrightarrow M$ by $f(x)=x$ if $x\notin B(m,R)$ and $f(x)=m$ if $x\in B(m,\delta)$. Let us estimate the norm of $f$. Take $x\neq y\in A$. It is immediate that $\frac{d(f(x),f(y))}{d(x,y)}\leq 1$ if either $\{x,y\}\subseteq B(m,\delta)$ or $\{x,y\}\subseteq M\setminus B(m,R)$. So let us examine the case that $y\in B(m,\delta)$ and $x\notin B(m,R)$. In such case we have
\[\begin{split}
\frac{d(f(x),f(y))}{d(x,y)}=\frac{d(x,m)}{d(x,y)}\leq \frac{d(x,y)+d(y,m)}{d(x,y)}\leq 1+\frac{d(y,m)}{d(x,m)-d(y,m)}\leq 1+\frac{\delta}{R-\delta}.
\end{split}\]
Taking supremum on $x\neq y$ we get $\Vert f\Vert\leq 1+\frac{\delta}{R-\delta}$. Selecting $\delta$ small enough so that $\frac{\delta}{R-\delta}<\varepsilon$ we get $\Vert f\Vert\leq 1+\varepsilon$. Now, applying the CEP condition, we guarantee the a existence of a norm-preserving extension of $f$ defined on $M$ which satisfies the requirements.\end{proof}

%
%
%

Now we are ready to give the prending proof.

\begin{proof}[Proof of Theorem \ref{theo:daugaLipgeneral}]
Let $f,g\in S_{\Lip(M,X)}$ and $\varepsilon>0$. Let us find, by (5) in Theorem \ref{theo:cadaDaugavet}, a sequence $\{f_n\}\subseteq \Lip(M,X)$ with $\Vert f_n\Vert\leq 1+\varepsilon$, $\Vert f+f_n\Vert\geq  2(1-\varepsilon)$ and $f_n\rightarrow^w g$.

To this end, as in Theorem \ref{theo:daugaF(M)finCEP}, we can find a sequence of different points $x_n$ so that, for every $\delta>0$ and $n\in\mathbb N$, $\Vert f_{|B(x_n,\delta)}\Vert>1-\varepsilon$. 

Now, for every $n\in\mathbb N$ find $R_n>0$ so that $B(x_n,R_n)$ are pairwise disjoint. We can assume that $0\notin B(x_n,R_n)$ for every $n\in\mathbb N$. Let $0<\eta<\varepsilon$. Given $n\in\mathbb N$ take by Lemma \ref{lema:daugaextindenlocons} $\varphi_n:M\longrightarrow M$ so that $\Vert \varphi\Vert\leq 1+\eta$, that $\varphi(x)=x$ for every $x\notin B(x_n,R_n)$ and that $\varphi=x_n$ on $B(x_n,\mu_n)$ for suitable $\mu_n>0$ which we can assume, with no loss of generality, that $1+\eta+\frac{\mu_n}{R_n-\mu_n}<1+\varepsilon$. 

By the assumptions on the points $x_n$ we can find, for every $n\in\mathbb N$, a pair of different points $u_n,v_n\in M$ so that $\frac{\Vert f(u_n)-f(v_n)\Vert}{d(u_n,v_n)}>1-\varepsilon$ and such that $\max\{\frac{d(u_n,x_n)}{\mu_n-d(u_n,x_n)},\linebreak \frac{d(v_n,x_n)}{\mu_n-d(v_n,x_n)} \}<1$. Now, using Lemma \ref{lemma:finiteceplemalocalF(M)dau}, we can find a norm-one Lipschitz function $\psi_n:M\longrightarrow M$ with $\psi=x_n$ in $M\setminus B(x_n,\mu_n)$, $\psi(x_n)=x_n$, $\psi(u_n)=u_n$ and $\psi(v_n)=v_n$.

 Let $g\circ \varphi_n:M\longrightarrow X^*$, which is a function of norm smaller than or equal to $(1+\eta)$ so that $g\circ\varphi=g$ on $M\setminus B(x_n,R_n)$ and $g\circ\varphi$ is constant on $B(x_n,\mu_n)$. On the other hand, let $h_n:=f\circ\psi_n-f\circ\psi_n(0)\in B_{\Lip(M,X^*)}$, which satisfies that is zero on $M\setminus B(x_n,\mu_n)$ and $h_n(u_n)-h_n(v_n)=(f\circ\psi_n)(u_n)-(f\circ\psi_n)(v_n)=f(u_n)-f(v_n)$. Finally define $f_n:=g\circ\varphi_n+h_n$. Note that $\Vert f_n\Vert\leq 1+\varepsilon$ by Lemma \ref{lemma:compuortolip}, that $\Vert f+f_n\Vert\geq 2(1-\varepsilon)$. Indeed, given $n\in\mathbb N$, we get 
\[
\begin{split}\frac{\Vert f(u_n)-f(v_n)+f_n(u_n)-f_n(v_n)\Vert}{d(u_n,v_n)}& =\frac{\Vert f(u_n)-f(v_n)+f(\psi_n(u_n))-f(\psi_n(v_n))\Vert}{d(u_n,v_n)}\\
& =2\frac{\Vert f(u_n)-f(v_n)\Vert}{d(u_n,v_n)}>2(1-\varepsilon).
\end{split}\]
Finally observe that $\operatorname{supp}(g-f_n)\subseteq B(x_n,R_n)$. Consequently, $g-f_n$ is a bounded sequence so that $\operatorname{supp}(g-f_n)\cap \operatorname{supp}(g-f_m)=\emptyset$ if $n\neq m$. \cite[Lemma 1.5]{ccgmr} implies that $g-f_n$ is weakly null, so $f_n\rightarrow ^w g$, and the proof is done. \end{proof}

It is now time for applications of the theorems of this section. In view of the results obtained in Section \ref{section:CEPandrelated} we start with the following characterisation for subsets of a Hilbert space.

\begin{theorem}\label{theo:daugahilbert}
Let $M$ be a pointed subset of a Hilbert space. The following are equivalent:
\begin{enumerate}
\item $\Lip(M,X)$ has the Daugavet property for every Banach space $X$.
\item $\Lip(M)$ has the Daugavet property.
\item $\mathcal F(M)\pten X$ has the Daugavet property for every Banach space $X$.
\item $\mathcal F(M)$ has the Daugavet property.
\item $M$ is length.
\item $M$ is convex.
\end{enumerate}
\end{theorem}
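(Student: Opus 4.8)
The plan is to prove the equivalence by establishing a cycle of implications, leaning heavily on the machinery already developed. The implications $(1)\Rightarrow(2)$ and $(3)\Rightarrow(4)$ are trivial (take $X=\mathbb R$, and recall $\mathcal F(M)\pten\mathbb R=\mathcal F(M)$), and $(2)\Leftrightarrow(4)\Leftrightarrow(5)$ is exactly the known characterisation of when $\Lip(M)$ has the Daugavet property quoted at the end of Section \ref{section:preliminaries}. So the real content is to close the loop by showing $(5)\Rightarrow(6)$, $(6)\Rightarrow(1)$, and $(6)\Rightarrow(3)$.

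For $(5)\Rightarrow(6)$ I would invoke Proposition \ref{prop:carasubhilCEP} directly: for a subset $M$ of a Hilbert space, being length is equivalent to being convex (this is the implication $(4)\Rightarrow(5)$ there, ultimately \cite[Theorem 2.9]{ikw}). For $(6)\Rightarrow(1)$, assume $M$ is convex. Then by Proposition \ref{prop:carasubhilCEP} again, $M$ is geodesic and has the CEP. Now Theorem \ref{theo:daugaLipgeneral} applies verbatim: a pointed geodesic metric space with the CEP has $\Lip(M,X)=L(\mathcal F(M),X)$ with the Daugavet property for every non-zero Banach space $X$ (and for $X=\{0\}$ the statement is vacuous or trivially handled, so one should say ``non-zero'' to match the earlier theorems). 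This gives $(1)$.

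For $(6)\Rightarrow(3)$, the cleanest route is to observe that convexity of $M$ gives, via Proposition \ref{prop:carasubhilCEP}, both that $M$ is geodesic and that $M$ has the CEP, hence in particular the finite-CEP; so Theorem \ref{theo:daugaF(M)finCEP} applies and yields that $\mathcal F(M)\pten X$ has the Daugavet property for every non-zero Banach space $X$. Alternatively one notes that $\mathcal F(M)\pten X$ is a subspace of $\Lip(M,X^*)=L(\mathcal F(M),X^*)$ in the natural way and that the Daugavet property for the larger space does not immediately pass to the subspace, so it is better to use Theorem \ref{theo:daugaF(M)finCEP} directly rather than to try to deduce $(3)$ from $(1)$. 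Together with the trivial $(3)\Rightarrow(4)$ and the known $(4)\Leftrightarrow(2)\Leftrightarrow(5)$, the cycle is complete.

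I do not anticipate a genuine obstacle here: every hard step has already been done, either in Section \ref{section:CEPandrelated} (the geometry of length/convex subsets of Hilbert space, and the fact that convex subsets of Hilbert spaces have the CEP) or in the two main Daugavet theorems of this section. The only points requiring a little care are bookkeeping: making sure one states ``non-zero Banach space $X$'' consistently (the $X=\{0\}$ case is degenerate), and making sure one cites Proposition \ref{prop:carasubhilCEP} for the non-obvious equivalences $(5)\Rightarrow(6)$ and $(6)\Rightarrow$ (geodesic $+$ CEP) rather than re-proving them. So the proof will essentially read: ``$(1)\Rightarrow(2)$ and $(3)\Rightarrow(4)$ are trivial; $(2)\Leftrightarrow(4)\Leftrightarrow(5)$ is known; $(5)\Rightarrow(6)$ by Proposition \ref{prop:carasubhilCEP}; and $(6)\Rightarrow(1)$, $(6)\Rightarrow(3)$ follow by combining Proposition \ref{prop:carasubhilCEP} with Theorems \ref{theo:daugaLipgeneral} and \ref{theo:daugaF(M)finCEP} respectively.''
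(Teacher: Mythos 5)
Your proof is correct and follows the paper's structure almost exactly: the trivial implications, the known equivalence of (2), (4), (5), the use of Proposition \ref{prop:carasubhilCEP} for (5)$\Rightarrow$(6), and Theorem \ref{theo:daugaLipgeneral} for (6)$\Rightarrow$(1). The one point of divergence is how (3) is reached: you prove (6)$\Rightarrow$(3) directly from Theorem \ref{theo:daugaF(M)finCEP} (convexity gives geodesic plus CEP, hence finite-CEP), whereas the paper deduces (3) from (1) via the identification $(\mathcal F(M)\pten X)^*=L(\mathcal F(M),X^*)=\Lip(M,X^*)$ together with the standard fact that the Daugavet property passes from a dual Banach space to its predual. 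Both routes are valid; your aside dismissing the deduction of (3) from (1) is slightly off the mark, since the obstruction you mention (Daugavet not passing to subspaces) is not what is needed --- the predual argument, which is what the paper uses, works fine and avoids invoking Theorem \ref{theo:daugaF(M)finCEP} at all. Conversely, your route has the mild advantage of not needing the dual-to-predual transfer. Your bookkeeping remarks (non-zero $X$, citing Proposition \ref{prop:carasubhilCEP} rather than reproving it) match the paper's treatment.
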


\begin{proof}
The implications (1)$\Rightarrow$(2) and (3)$\Rightarrow$(4) is immediate. (1)$\Rightarrow$(3) follows because, given a Banach space $X$, $(\mathcal F(M)\pten X)^*=L(\mathcal F(M),X^*)=\Lip(M,X^*)$, and now the implication follows since the Daugavet property passes from a dual Banach space to its predual. (4)$\Rightarrow$(5) is from \cite[Theorem 3.5]{gpr18} whereas (5)$\Rightarrow$(6) is described in Proposition \ref{prop:carasubhilCEP}. Finally, (6)$\Rightarrow$(1) follows since $M$ is geodesic and has the CEP by Proposition \ref{prop:carasubhilCEP}, so Theorem \ref{theo:daugaLipgeneral} applies.
\end{proof}

\begin{remark}\label{remark:mejoraF(H)}
Observe that in \cite[Proposition 3.11]{gpr18} it is proved that $\Lip(M,X)$ has the Daugavet property if $M$ is pointed and lenght and the pair $(M,X)$ satisfies that every Lipschitz function $f:N\subseteq M\longrightarrow X^*$ admits a norm-preserving extension. As main consequence of the above result, it is pointed out that $\mathcal F(H)\pten H$ has the Daugavet property for every Hilbert space $H$. Observe that as a consequence of Theorem \ref{theo:daugahilbert} it is obtained that $\mathcal F(H)\pten X$ has the Daugavet property for every $X$.
\end{remark}

We turn now to giving applications in injectivity assumptions. As we proved in Corollary \ref{coro:finiteCEP}, if $M$ is finitely injective then $M$ has the finite CEP. Clearly, injective metric spaces have the CEP. Taking this into account, we derive from Theorems \ref{theo:daugaF(M)finCEP} and \ref{theo:daugaLipgeneral} the following interesting application to Banach spaces.

\begin{theorem}\label{theo:daugaL1preduals}
Let $X$ be a Banach space.
\begin{enumerate}
\item If $X$ is an $L_1$-predual, then $\mathcal F(X)\pten Y$ has the Daugavet property for every non-zero Banach space $Y$.
\item Moreover, if $X$ is even an injective Banach space, then $\Lip(X,Y)=L(\mathcal F(X),Y)$ has the Daugavet property for every non-zero Banach space $Y$. In particular, $(\mathcal F(X)\pten Y)^*$ (and consequently $\mathcal F(X)\pten Y$ itself) has the Daugavet property for every non-zero Banach space $Y$.
\end{enumerate}
\end{theorem}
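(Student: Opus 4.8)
The plan is to derive Theorem \ref{theo:daugaL1preduals} as a direct application of the two main theorems of the section, together with the results from Section \ref{section:CEPandrelated}. Before anything else, I would check the standing hypothesis hidden in both Theorem \ref{theo:daugaF(M)finCEP} and Theorem \ref{theo:daugaLipgeneral}: those results require $\mathcal F(M)$ (equivalently $M$) to have the Daugavet property, i.e. $M$ must be geodesic (or at least length). So the first step is to observe that a Banach space, viewed as a pointed metric space, is always geodesic: for any $x,y$ the point $z=\tfrac{x+y}{2}$ satisfies $d(x,z)=d(y,z)=\tfrac{d(x,y)}{2}$, hence $\mathcal F(X)$ has the Daugavet property for \emph{every} infinite-dimensional — indeed every non-trivial — Banach space $X$. (One should note $X$ must have more than one point, which is automatic for a non-zero Banach space.)

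For item (1), the key input is that $L_1$-predual Banach spaces are finitely injective. This is recorded in the excerpt: by \cite[Theorem 6.1]{linds64} an $L_1$-predual satisfies the finite intersection property of balls and is geodesic, which is exactly the characterisation of finite injectivity from \cite{arpa1956} combined with Theorem \ref{theo:caracompainj}. Then Corollary \ref{coro:finiteCEP} gives that $X$ has the finite-CEP. Since $X$ is also geodesic by the first step, Theorem \ref{theo:daugaF(M)finCEP} applies with $M=X$ and yields that $\mathcal F(X)\pten Y$ has the Daugavet property for every non-zero $Y$.

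For item (2), if $X$ is injective then it trivially has the CEP (injective metric spaces enjoy the CEP, as noted right after Definition \ref{defi:injective}: extend via the injectivity property using $N\subseteq M$ as the pair of metric spaces). Again $X$ is geodesic, so Theorem \ref{theo:daugaLipgeneral} applies and gives that $\Lip(X,Y)=L(\mathcal F(X),Y)$ has the Daugavet property for every non-zero $Y$. The ``in particular'' clause follows by specialising $Y$ to a dual space: for any non-zero $Y$, $\Lip(X,Y^*)=L(\mathcal F(X),Y^*)=(\mathcal F(X)\pten Y)^*$ has the Daugavet property, and then the Daugavet property passes from a dual Banach space to its predual, so $\mathcal F(X)\pten Y$ itself has the Daugavet property. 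Note this reproves item (1) in the injective case, but (1) is genuinely more general since there are non-injective $L_1$-preduals (e.g. separable ones).

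There is essentially no obstacle here: the theorem is a harvesting of the machinery already built. The only point requiring a moment's care is making sure the geodesic hypothesis of the two main theorems is met — which is the trivial observation about Banach spaces above — and correctly citing the chain $L_1\text{-predual}\Rightarrow\text{finite intersection property}\Rightarrow\text{finitely injective}\Rightarrow\text{finite-CEP}$ via \cite{linds64}, \cite{arpa1956}, Theorem \ref{theo:caracompainj} and Corollary \ref{coro:finiteCEP}. One should also remark that when $X$ is strictly convex an $L_1$-predual is the scalar field only in trivial cases, so no strict-convexity subtlety arises; the statement is uniform over all $L_1$-preduals.
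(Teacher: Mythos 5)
Your proposal is correct and follows essentially the same route as the paper, which derives the theorem directly from Theorems \ref{theo:daugaF(M)finCEP} and \ref{theo:daugaLipgeneral} via the chain $L_1$-predual $\Rightarrow$ finitely injective $\Rightarrow$ finite-CEP (Corollary \ref{coro:finiteCEP}), respectively injective $\Rightarrow$ CEP, together with the observation that a Banach space is geodesic and the standard duality $(\mathcal F(X)\pten Y)^*=\Lip(X,Y^*)$ plus the passage of the Daugavet property from a dual space to its predual.
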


Observe that in \cite[Question 1]{gpr18} and in \cite[Remark 3.8]{lr2020} it is asked whether if $\mathcal F(M)$ has the Daugavet property this implies that $\mathcal F(M)\pten X$ or $\Lip(M,X)$ has the Daugavet property for every non-zero Banach space $X$. Theorems \ref{theo:daugaF(M)finCEP} and \ref{theo:daugaLipgeneral} gives a positive answer when the metric space has the (finite) CEP and is geodesic.

Observe that this also gives a partial positive answer to \cite[Question 3.2]{blrohlip}, where it is asked whether the norm of $\mathcal F(M)\pten X$ is octahedral for every non-zero Banach space $X$ whenever $\mathcal F(M)$ is octahedral. Recall that the norm on a Banach space $X$ is said to be  \textit{octahedral} if for every $\varepsilon>0$ and for every finite-dimensional subspace $F$ of $X$ there is some $y$ in the unit sphere of $X$ such that $$\Vert x+\lambda y\Vert\geq (1-\varepsilon)(\Vert x\Vert +\vert \lambda\vert)$$ holds for every $x\in F$ and for every scalar $\lambda$ (see \cite{dgz}).

As we have pointed out, Theorems \ref{theo:daugaF(M)finCEP} and \ref{theo:daugaLipgeneral} provide examples of metric spaces $M$ so that $\mathcal F(M)\pten X$ is octahedral for every Banach space $X$. This is a consequence of the fact that the norm of a Banach space with the Daugavet property is octahedral \cite[Lemma 2.8]{kssw}.

Let us end the paper giving more examples of octahedral norms in spaces of the form $\mathcal F(M)\pten X$. This time, octahedrality condition allows to move the condition on CEP to the target space.

\begin{theorem}\label{theo:OHCEPtarget}
Let $M$ be a metric space so that $M'\neq \emptyset$ and let $Y$ be a non-zero Banach space so that $Y^*$ has the finite-CEP. Then the norm of $\mathcal F(M)\pten Y$ is octahedral.
\end{theorem}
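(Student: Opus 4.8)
The plan is to establish octahedrality via its dual characterisation: the norm of a Banach space $Z$ is octahedral if and only if $Z^*$ has the weak-star strong diameter two property, i.e. every convex combination of weak-star slices of $B_{Z^*}$ has diameter $2$ (see \cite{blr18,haller}). Since $(\mathcal F(M)\pten Y)^*=L(\mathcal F(M),Y^*)=\Lip(M,Y^*)$, it suffices to show: given finitely many weak-star slices $S_1,\ldots,S_k$ of $B_{\Lip(M,Y^*)}$, positive weights $\lambda_i$ summing to $1$, and $\varepsilon>0$, one can find $f,g\in\sum_i\lambda_i S_i$ with $\Vert f-g\Vert>2-\varepsilon$. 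By the usual density argument, each defining functional may be taken of the form $z_i=\sum_{j}\delta_{m_{ij}}\otimes y_{ij}$ with finitely many points $m_{ij}\in M$ and $y_{ij}\in Y$; collect all these points into a finite set $F\subseteq M$, and pick $h_i\in S_i$ witnessing the slices.

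First I would exploit the hypothesis $M'\neq\emptyset$: fix a cluster point $t_0$ of $M$ and a sequence $t_n\to t_0$ with $t_n\neq t_0$, all of which we may assume to avoid the finite set $F\cup\{0\}$. The idea is to perturb each $h_i$ by adding a bump supported near $t_0$ that pushes $\Vert f-g\Vert$ close to $2$, while the perturbation is invisible to the test functionals $z_i$ because they only see points in $F$. Concretely, I would choose indices $n\neq n'$ large enough that $d(t_n,t_0)$ and $d(t_{n'},t_0)$ are tiny compared with the distance from $t_0$ to $F$; pick a norm-one $y^*\in S_{Y^*}$ and set, for each $i$, $f:=h_i+u$ on the branch and $g:=h_i-u$, where $u$ is a Lipschitz function valued in $Y^*$, supported in a small ball $B(t_0,\delta)$, with $\Vert u\Vert$ close to $1$ and $u(t_n)-u(t_{n'})$ of norm close to $\frac{d(t_n,t_{n'})}{1}y^*$ — this is where the finite-CEP of $Y^*$ enters, exactly as in Lemmata \ref{lemma:fininyextidentidad}--\ref{lemma:finiteceplemalocalF(M)dau}: the finite-CEP lets us extend a finitely-valued $Y^*$-valued contraction defined on $\{t_n,t_{n'}\}\cup(M\setminus B(t_0,\delta))$ to all of $M$ without increasing the norm. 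The orthogonality-type estimate of Lemma \ref{lemma:compuortolip} (or a direct elementary computation, since $M$ need not be geodesic here, the bump being supported in a small ball around a cluster point) controls $\Vert h_i\pm u\Vert\leq 1+\text{(small)}$, after which a harmless rescaling by $(1+\text{small})^{-1}$ keeps everything in the respective slices $S_i$.

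Then I would take $f:=\sum_i\lambda_i(h_i+u)/(1+\kappa)$ and $g:=\sum_i\lambda_i(h_i-u)/(1+\kappa)$ with $\kappa$ the small norm excess; since each summand $h_i\pm u$ lies in $(1+\kappa)B$ and, rescaled, still satisfies $z_i\big(\tfrac{h_i\pm u}{1+\kappa}\big)>1-\alpha_i$ (because $z_i$ is unchanged by $u$ and $h_i$ already over-shot the slice threshold by a factor we can absorb), we get $f,g\in\sum_i\lambda_i S_i$. Finally $f-g=\frac{2}{1+\kappa}u$, and evaluating the Lipschitz quotient at the pair $t_n,t_{n'}$ gives $\Vert f-g\Vert\geq\frac{2}{1+\kappa}\cdot\frac{\Vert u(t_n)-u(t_{n'})\Vert}{d(t_n,t_{n'})}>2-\varepsilon$ for suitable choices of the parameters. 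This yields diameter $2$ for the convex combination of slices, hence octahedrality of $\mathcal F(M)\pten Y$.

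The main obstacle I anticipate is the bookkeeping of the several small parameters — the radius $\delta$ of the bump, the Lipschitz-norm excess $\kappa$, the slice widths $\alpha_i$ and the cushion $\eta_0$ by which the $h_i$ overshoot — so that the rescaled perturbed functions simultaneously (a) stay in $B_{\Lip(M,Y^*)}$, (b) remain in their slices $S_i$, and (c) have $f-g$ of norm exceeding $2-\varepsilon$. A secondary subtlety is that, unlike in Theorems \ref{theo:daugaF(M)finCEP} and \ref{theo:daugaLipgeneral}, $M$ is \emph{not} assumed geodesic, so Lemma \ref{lemma:compuortolip} cannot be invoked verbatim; instead one argues directly that a function supported in the small ball $B(t_0,\delta)$ and a function that is \emph{constant} on a larger ball $B(t_0,R)$ (here $h_i$ composed with the retraction-like map from Lemma \ref{lemma:fininyextidentidad}, which exists since $Y^*$ has the finite-CEP and $t_0$ is a cluster point so there is room for the construction) add up with controlled Lipschitz norm — a computation essentially identical to the proof of Lemma \ref{lemma:compuortolip} but only needing the triangle inequality near $t_0$ rather than a geodesic between far points.
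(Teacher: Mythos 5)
Your overall strategy (pass to $(\mathcal F(M)\pten Y)^*=\Lip(M,Y^*)$ and add a perturbation supported near a cluster point which the finitely many test functionals cannot detect) is the same as the paper's; working with the weak-star SD2P formulation instead of the criterion of \cite[Theorem 2.1]{hlp} is an equivalent and harmless change of dress. However, there is a concrete misuse of the hypothesis in your construction: you claim that the finite-CEP of $Y^*$ ``lets us extend a finitely-valued $Y^*$-valued contraction defined on $\{t_n,t_{n'}\}\cup(M\setminus B(t_0,\delta))$ to all of $M$''. The finite-CEP of $Y^*$ only concerns maps from subsets of $Y^*$ into $Y^*$ (this is how Lemmata \ref{lemma:fininyextidentidad} and \ref{lemma:finiteceplemalocalF(M)dau} are used: inside the space that has the property); it gives no extension of $Y^*$-valued maps defined on subsets of the arbitrary metric space $M$. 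That would require finite injectivity of $Y^*$ (via Theorem \ref{theo:carafinincompextension}), which is strictly stronger: a Hilbert space $Y^*$ --- one of the two cases the theorem is meant to cover --- has the finite-CEP but fails the finite intersection property of balls. This particular step is repairable without any hypothesis at all: the scalar tent function $u_0(x)=\max\{0,\,d(t_n,t_{n'})-d(x,t_n)\}$ tensored with a fixed $y^*\in S_{Y^*}$ is $1$-Lipschitz, supported in a small ball around $t_0$, and has slope exactly $1$ between $t_n$ and $t_{n'}$; the paper builds its bump by McShane for precisely this reason.

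The real gap is the central estimate $\Vert h_i\pm u\Vert\le 1+\kappa$, which is false for an arbitrary $h_i$ in the slice: take $M=[0,1]$, $Y^*=\mathbb R$, $h_i=\mathrm{id}$ and $u$ a tent of slope $1$ near an interior point; then $\Vert h_i+u\Vert=2$, and the rescaling needed to return to the unit ball destroys both the slice membership and the lower bound on $\Vert f-g\Vert$. Lemma \ref{lemma:compuortolip} (or any variant of it) only applies after $h_i$ has been made \emph{constant} on a ball around $t_0$ much larger than the support of $u$, and this flattening is exactly where the finite-CEP of $Y^*$ must enter: replace $h_i$ by $\varphi_i\circ h_i/(1+\eta)$, where $\varphi_i:Y^*\longrightarrow Y^*$ comes from Lemma \ref{lemma:fininyextidentidad} applied in the \emph{target} space, fixing the finitely many values $h_i(m_{ij})$ (so that the flattened function still essentially lies in $S_i$) and collapsing a small ball of $Y^*$ around $h_i(t_0)$; the Lemma \ref{lemma:compuortolip}-type bound then goes through using the convexity of $Y^*$ in place of geodesicity of $M$, and only afterwards does one choose $\delta$, the pair $t_n,t_{n'}$ and the bump inside the flattened region. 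You do allude to this composition in your closing paragraph, but it appears there as an aside about Lemma \ref{lemma:compuortolip}, while the construction in your second and third paragraphs perturbs the unflattened $h_i$ and therefore breaks down at the norm estimate; you also never record that $\varphi_i$ must fix the points $h_i(m_{ij})$, which is what keeps the flattened function in its slice. With the flattening incorporated as the first step and the radii chosen in that order, your argument becomes essentially the proof in the paper.
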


\begin{proof} With no loss of generality, up to changing the origin, we assume that $0\in M'$.

In order to prove that the norm of $\mathcal F(M)\pten Y$ is octahedral take $\mu_1,\ldots, \mu_n\in \mathcal F(M)\pten Y$ and $\varepsilon>0$, and let us find $\mu\in S_{\mathcal F(M)\pten Y}$ so that, for every $1\leq i\leq n$, $\Vert \mu_i+\mu\Vert$ is as close to $1+\Vert\mu_i\Vert$ 
as we wish. This is enough by \cite[Theorem 2.1]{hlp}. To do so, take $\varepsilon>0$. By a density argument we can assume with no loss of generality that $\mu_i:=\sum_{j=1}^{k_i} \delta_{x_{ij}}\otimes y_{ij}$ for certain $y_{ij}\in Y\setminus\{0\}$ and $x_{ij}\in M\setminus\{0\}$ for every $1\leq i\leq n, 1\leq j\leq k_i$ (observe that the assumption on the $x_{ij}$'s holds because $\delta_0$ is the zero vector of $\mathcal F(M)$).
Take $\varepsilon>0$ and, for every $1\leq i\leq n$, $g_i\in S_{\Lip(M,Y^*)}$ so that $g_i(\mu_i)=\sum_{j=1}^{k_i}g_i(x_{ij})(y_{ij})>\Vert \mu_i\Vert-\varepsilon$. 

Take $0<\eta<\varepsilon$ and $1\leq i\leq n$. By Lemma \ref{lemma:fininyextidentidad} we can find $\delta_i>0$ and $\varphi_i:Y^*\longrightarrow Y^*$ so that $\Vert \varphi_i\Vert\leq 1+\eta$, $\varphi_i(g_i(x_{ij}))=g_i(x_{ij})$ holds for $1\leq j\leq k_i$ and $\varphi_i=0$ on $B_{Y^*}(0,\delta_i)$. Given $i\in\{1,\ldots, n\}$ define $h_i:=\frac{\varphi_i\circ g_i}{1+\eta}:M\longrightarrow Y^*$, which satisfies $\Vert h_i\Vert\leq 1$. Moreover,
$$h_i(\mu)=\frac{\sum_{j=1}^{k_i}\varphi_i(g(x_{ij}))(y_{ij})}{1+\eta}=\frac{\sum_{j=1}^{k_i}g_i(x_{ij})(y_{ij})}{1+\eta}>\frac{\Vert \mu_i\Vert-\varepsilon}{1+\eta}.$$ 
Moreover observe that $h_i=0$ on $B(0,\delta_i)$. Indeed, given $x\in B(0,\delta_i)$ then $\Vert g_i(x)\Vert=\Vert g_i(x)-g_i(0)\Vert\leq \Vert g_i\Vert \Vert x\Vert<\delta_i$. This implies that $g_i(x)\in B_{Y^*}(0,\delta_i)$, thus $\varphi_i(g_i(x))=0$.

Take $\delta:=\min\{\delta_1,\ldots, \delta_n\}>0$. By the above $h_i=0$ on $B(0,\delta)$ for every $1\leq i\leq n$. 

Now take $\beta>0$ small enough so that $x_{ij}\notin B(0,\beta)$ for every $i,j$, that $2\beta<\delta$ and that $\frac{\beta}{\delta-\beta}<\varepsilon$, and take a norm-one Lipschitz function $f\in\Lip(M,Y^*)$ so that $f(x)=0$ for every $x\in M\setminus B(0,\beta)$. In order to ensure the existence of such function take $x_0\in B(0,\beta)\setminus\{0\}$, which does exist because $0\in M'$, and define by McShane theorem \cite[Theorem 1.33]{weaver}  a function $u:M\longrightarrow \mathbb R$ so that $u(0)=0$, $u(x)=0$ on $M\setminus B(0,\beta)$ and $u(x_0)=1$. Take $y_0^*\in S_{Y^*}$ and the Lipschitz function $u\otimes y_0^*$ defined by $(u\otimes y_0^*)(x)=u(x)y_0^*$ clearly vanishes out of $B(0,\beta)$ and $(u\otimes y_0^*)(x_0)=y_0^*$, so $u\otimes y_0^*$ is a non-zero Lipschitz function in $\Lip(M,Y^*)$. Now $f:=\frac{u\otimes y_0^*}{\Vert u\otimes y_0^*\Vert}$ satisfies our requirements.

Define $f_i:=h_i+f:M\longrightarrow Y^*$. Observe that Lemma \ref{lemma:compuortolip} implies that $\Vert f_i\Vert\leq 1+\frac{\beta}{\delta-\beta}<1+\varepsilon$. Moreover, since $x_{ij}\notin B(0,\delta)$ for every $i, j$, we get $f(x_{ij})=0$ for every $i,j$. Henceforth
$$f_i(\mu_i)=h_i(\mu_i)>\frac{\Vert \mu_i\Vert-\varepsilon}{1+\eta}.$$
On the other hand, since $\Vert f\Vert=1$, find $x,y\in M$ with $x\neq y$ so that $\frac{\Vert f(x)-f(z)\Vert}{d(x,z)}>1-\varepsilon$. We can assume with no loss of generality that $x\in B(0,\beta)$. Observe that we can also assume that $z\in B(0,2\beta)$, because otherwise $f(z)=0$ and $d(x,z)\geq\beta$, so
$$1-\varepsilon<\frac{\Vert f(x)\Vert}{d(x,z)}\leq \frac{\Vert f(x)\Vert}{d(x,0)}=\frac{\Vert f(x)-f(0)\Vert}{d(x,0)},$$
and in such case we can replace $z$ with $0$. In particular $x,z\in B(0,2\beta)\subseteq B(0,\delta)$ and then $h_i(x)=h_i(z)=0$ holds for every $1\leq i\leq n$. Denote by $\mu:=\frac{\delta_x-\delta_z}{d(x,z)}\otimes y$, which is a norm-one element of $\mathcal F(M)\pten Y$. Observe that $h_i(\mu)=0$ because $x,z\in B(0,\delta)$. Hence, given $1\leq i\leq n$, we get
\[
\Vert \mu_i+\mu\Vert\geq \frac{f_i(\mu_i+\mu)}{\Vert f_i\Vert}\geq \frac{f_i(\mu_i)+f(\mu)}{1+\varepsilon}>\frac{\frac{\Vert \mu_i\Vert-\varepsilon}{1+\eta}+1-\varepsilon}{1+\varepsilon}
\]
Since $\varepsilon$ and $\eta$ can be taken as small as we whish we conclude that the norm of $\mathcal F(M)\pten Y$ is octahedral.\end{proof}

\begin{remark}
In Theorem \ref{theo:OHCEPtarget} we have two possibilites for $Y^*$: either $Y^*$ (and hence $Y$) is a Hilbert space or $Y^*$ is an $L_1$-predual. The latter case does not provide extra information about octahedrality on $\mathcal F(M)\pten Y$. This is because, under the hypothesis on $Y^*$, $Y=L_1(\mu)$ for certain measure $\mu$, so
$$\mathcal F(M)\pten Y=\mathcal F(M)\pten L_1(\mu)=L_1(\mu,\mathcal F(M)),$$
where the last identification can be seen in \cite[Example 2.19]{ryan}. Since the norm of $\mathcal F(M)$ is octahedral \cite[Theorem 2.4]{blrohlip}, then $L_1(\mu,\mathcal F(M))$ always have octahedral norm (see e.g. \cite[P. 852]{llr}). However, to the best of our knownledge, the octahedrality of $\mathcal F(M)\pten H$ when $M'\neq\emptyset$ and $H$ is a Hilbert space was unknown.
\end{remark}

\begin{remark}
Observe that, in the proof of Theorem \ref{theo:OHCEPtarget}, when we define the function $f$ we do not have any control on the pairs $x,y$ at which $f$ approximates its Lipschitz norm. Because of that, we do not know whether the argument can be adapted to obtain that $\mathcal F(M)\pten Y$ has the Daugavet property if we assume on $M$ that it is length (or even geodesic).
\end{remark}

%
%
%

\section*{Acknowledgements}  The author thanks A. Avil\'es, G. Mart\'inez-Cervantes and J. Rodr\'iguez for fruitful conversations on the topic of the paper.

\bibliographystyle{amsplain}

\end{document}